\numberwithin{equation}{section}
\newtheorem{theo} {Theorem} [section]
\newtheorem{prop}[theo]{Proposition}
\newtheorem{coro}  [theo]     {Corollary}
\newtheorem{lemm}  [theo]     {Lemma}
\newtheorem{rema}  [theo]     {Remark}
\newtheorem{defi}  [theo]     {Definition}
\newtheorem{Claim}  [theo]     {Claim}
\newcommand{\rank}{\textmd{rank}}
\newcommand{\End}{\textmd{End}}
\begin{document}
\pagestyle{fancy}
\renewcommand{\headrulewidth}{0pt}

\fancyhf{}

\fancyhead[OL]{\leftmark}

\fancyhead[ER]{Cartan Formula and Its Applications}

\fancyhead[OR,EL]{$\cdot$\ \thepage\ $\cdot$}

\renewcommand{\sectionname}{}
\renewcommand{\sectionmark}[1]{\markboth{}{\thesection\ #1}}
\renewcommand{\sectionmark}[1]{\markright{\thesection\ #1}{}}

\title{Remarks on the Cartan Formula and Its Applications}

\author{Kefeng Liu}
\address{Department of Mathematics, University of California at Los Angeles,
 Angeles, CA 90095-1555, USA, Center of Mathematical Sciences, Zhejiang
University, Hangzhou, China}
\email{liu@math.ucla.edu,liu@cms.zju.edu.cn}


\author{Sheng Rao}
\address{Center of Mathematical Sciences, Zhejiang University, Hangzhou 310027, China}
\email{rsxiaotang@cms.zju.edu.cn}


\subjclass[2000]{}

\date{July 28, 2010 and, in revised form, March 23, 2011.}


\keywords{Deformation theory, Kodaira-Spencer theory}

\begin{abstract}
In this short note, we present certain generalized versions of the
commutator formulas of some natural operators on manifolds, and give
some applications.

\end{abstract}

\maketitle


\vspace{1cm}
\section{Introduction}

The purpose of this note is to present several general commutator
formulas of certain natural operators on Riemannian manifolds,
complex manifolds and generalized complex manifolds. We would like
to point out that such commutator formulas are essentially
consequences of the classical Cartan formula for Lie derivative, but
they have deep applications in geometry such as in studying the
smoothness of deformation spaces of manifolds. For example one
direct consequence of the commutator formula is the Tian-Todorov
lemma which is essential for proving the smoothness of the
deformation space of Calabi-Yau manifolds in \cite{[T],[T1]} and
also \cite{[T2]}. The general commutator formulas derived in the
note also have applications in proving smoothness of more general
deformation spaces such as that of the generalized complex manifolds
in \cite{[Li]}. We will discuss the applications of these commutator
formulas in deformation theory in our subsequent work.

\vspace{1cm}
\section{Cartan formula and a general commutator formula}
In this section we first present a general commutator formula on a
Riemannian manifold. We first fix notations. Given a smooth vector
field $X$, a smooth vector bundle $V$ on the smooth Riemannian
manifold $M$, and a connection $\nabla$ on $V$ which extends to
covariant derivative on the space of smooth $V$-valued differential
forms $\Omega^*(V)$, we denote by $L_X$ the Lie derivative acting on
$\Omega^*(V)$ and by $\lrcorner$ the contraction operator. We will
also denote by $\iota_X$ the contraction of a differential form by
the vector field $X$. Unless specially designated, $[\cdot,\cdot]$
will always denote the usual Lie bracket.

Our starting point is the following formula as Cartan observed:
\begin{equation}\label{C}
L_X\omega=X\lrcorner(\nabla\omega)+\nabla(X\lrcorner\omega),\
\text{for $\omega\in\Omega^*(V)$}.
\end{equation}
Then our general commutator formula can be stated as follows.

\begin{lemm}
For two smooth vector field $X$ and $Y$ on M,
\begin{equation}\label{cf}
[X,Y]\lrcorner\omega=X\lrcorner\nabla(Y\lrcorner\omega)+\nabla(X\lrcorner(Y\lrcorner\omega))-Y\lrcorner(X\lrcorner\nabla\omega)-Y\lrcorner\nabla(X\lrcorner\omega).
\end{equation}
\end{lemm}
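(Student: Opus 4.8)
The plan is to recognize \eqref{cf} as the $V$-valued avatar of the classical Cartan commutation relation $[L_X,\iota_Y]=\iota_{[X,Y]}$, and to derive it from \eqref{C} by pure substitution together with a single Leibniz computation. First I would apply the Cartan formula \eqref{C} twice: once to the form $Y\lrcorner\omega\in\Omega^*(V)$, which gives $L_X(Y\lrcorner\omega)=X\lrcorner\nabla(Y\lrcorner\omega)+\nabla\big(X\lrcorner(Y\lrcorner\omega)\big)$, and once to $\omega$ itself, which after contracting with $Y$ gives $Y\lrcorner(L_X\omega)=Y\lrcorner(X\lrcorner\nabla\omega)+Y\lrcorner\nabla(X\lrcorner\omega)$. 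Subtracting, the right-hand side of \eqref{cf} is precisely $L_X(Y\lrcorner\omega)-Y\lrcorner(L_X\omega)$, so the lemma is equivalent to the operator identity
\[
  L_X\circ(Y\lrcorner\,)-(Y\lrcorner\,)\circ L_X=[X,Y]\lrcorner\,\qquad\text{on }\Omega^*(V).
\]
Note that this reduction uses only \eqref{C}, not the particular way $L_X$ is defined on $\Omega^*(V)$.

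To prove the displayed identity I would argue locally. Both sides are local operators, so I fix a local frame $\{e_i\}$ of $V$, write a general element of $\Omega^*(V)$ as $\sum_i\alpha^i\otimes e_i$ with $\alpha^i\in\Omega^*(M)$, and by additivity reduce to $\omega=\alpha\otimes s$ with $s$ a local section of $V$. The one genuine computation is the Leibniz rule
\[
  L_X(\alpha\otimes s)=(L_X\alpha)\otimes s+\alpha\otimes\nabla_Xs ,
\]
which I would obtain by substituting $L_X=(X\lrcorner\,)\nabla+\nabla(X\lrcorner\,)$ from \eqref{C}, expanding via the graded Leibniz rules for $\nabla$ and for $X\lrcorner\,$ on $\alpha\otimes s$, and using $X\lrcorner\nabla s=\nabla_Xs$: the two terms proportional to $(X\lrcorner\alpha)\wedge\nabla s$ come with opposite signs $(-1)^{|\alpha|}$ and cancel, while the scalar part reassembles into $L_X\alpha=(X\lrcorner d+dX\lrcorner)\alpha$. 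Granting this, a short computation of $\big(L_X\circ(Y\lrcorner\,)-(Y\lrcorner\,)\circ L_X\big)(\alpha\otimes s)$ shows that the two terms containing $\nabla_Xs$ cancel, leaving $\big((L_X\circ(Y\lrcorner\,)-(Y\lrcorner\,)\circ L_X)\alpha\big)\otimes s$; by the classical scalar Cartan identity this equals $([X,Y]\lrcorner\alpha)\otimes s=[X,Y]\lrcorner(\alpha\otimes s)$, which finishes the proof.

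Everything here is formal once the signs are pinned down, so I expect the only point requiring care to be the bookkeeping of the $(-1)^{|\alpha|}$ factors in the graded Leibniz rules for $\nabla$ and for contraction, together with making the passage to decomposable elements $\alpha\otimes s$ rigorous. If one prefers to avoid frames, an alternative is to observe that $X\lrcorner\,$ is a graded derivation of degree $-1$ and $L_X$ one of degree $0$ on the $\Omega^*(M)$-module $\Omega^*(V)$, so their graded commutator is a degree $-1$ graded derivation lying over $\iota_{[X,Y]}$ on $\Omega^*(M)$ and vanishing on $\Omega^0(V)$; such a derivation must be contraction by $[X,Y]$. This trades the local computation for a standard uniqueness statement about derivations of this type.
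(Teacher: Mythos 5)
Your proposal is correct, and its first half is exactly the paper's own argument: apply the Cartan formula \eqref{C} to $Y\lrcorner\omega$ and to $\omega$, so that the right-hand side of \eqref{cf} becomes $L_X(Y\lrcorner\omega)-Y\lrcorner(L_X\omega)$, and then invoke $[L_X,\iota_Y]=\iota_{[X,Y]}$ on $\Omega^*(V)$. Where you go beyond the paper is that the paper simply writes $L_X(Y\lrcorner\omega)=(L_XY)\lrcorner\omega+Y\lrcorner(L_X\omega)$ as an unproved Leibniz property of the Lie derivative on $V$-valued forms, whereas you actually establish this commutator identity for the operator defined by \eqref{C}: you prove the Leibniz rule $L_X(\alpha\otimes s)=(L_X\alpha)\otimes s+\alpha\otimes\nabla_Xs$ locally (the sign cancellation of the two $(X\lrcorner\alpha)\wedge\nabla s$ terms is right), deduce that the $\nabla_Xs$ contributions drop out of $[L_X,\iota_Y](\alpha\otimes s)$, and conclude from the scalar identity \eqref{Ci}. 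This makes the argument genuinely self-contained --- which matters here, since if \eqref{C} is read as the \emph{definition} of $L_X$ on $\Omega^*(V)$ (the covariant Lie derivative), the derivation property is not automatic and does need the verification you supply. Your local-frame computation is close in spirit to the paper's second proof in Remark 2.3, which also reduces to $\tau\otimes s$ for a local frame and uses the covariant-derivative formula \eqref{cd} together with the scalar identity \eqref{St}; your version organizes the same bookkeeping through the Leibniz rule for $L_X$ instead of the auxiliary contraction identity \eqref{cd1}. The alternative frame-free argument you sketch (both $[L_X,\iota_Y]$ and $\iota_{[X,Y]}$ are operators on the $\Omega^*(M)$-module $\Omega^*(V)$ lying over $\iota_{[X,Y]}$ and vanishing on $\Omega^0(V)$, hence equal) is also sound, since $\Omega^*(V)$ is generated over $\Omega^*(M)$ by $\Omega^0(V)$. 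In short: same route as the paper, with the one step the paper takes for granted properly justified.
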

\begin{proof}
On one hand, it is obvious by Cartan's formula (\ref{C}) that
$$L_X(Y\lrcorner\omega)=X\lrcorner\nabla(Y\lrcorner\omega)+\nabla(X\lrcorner(Y\lrcorner\omega)).$$
And on the other hand,
$$L_X(Y\lrcorner\omega)=(L_XY)\lrcorner\omega+Y\lrcorner (L_X\omega)=[X,Y]\lrcorner\omega+Y\lrcorner(X\lrcorner\nabla\omega+\nabla(X\lrcorner\omega)),$$
where the last identity apply Cartan's formula (\ref{C}) again. Then
(\ref{cf}) follows from these two identities above.
\end{proof}

\begin{rema}\label{Stf}\footnote{This remark is essentially due to R. Friedman \cite{[F]}.} This formula can be considered as a slight generalization of the
well-known commutator formula of Lie derivatives acting on
differential forms:
\begin{equation}\label{Ci}
[L_X, \iota_Y] =\iota_{[X,Y]}.
\end{equation}
It is easy to see that the commutator formula (\ref{Ci}) is a
special case of our formula when $V$ is taken as a trivial bundle on
the manifold $M$. In fact, applying both sides of (\ref{cf}) to a
differential form $\tau\in\Omega^*(M)$, we easily get
\begin{equation}\label{St}
Y\lrcorner(X\lrcorner d\tau)=X\lrcorner
d(Y\lrcorner\tau)+d\big(X\lrcorner(Y\lrcorner\tau)\big)-Y\lrcorner
d(X\lrcorner\tau)-[X,Y]\lrcorner\tau,
\end{equation}
which can also be written as
\begin{equation}\label{St1}
[X,Y]\lrcorner\tau=L_X(Y\lrcorner\tau)-Y\lrcorner(L_X\tau),
\end{equation}
which is just the formula (\ref{Ci}). (See Formula LIE $5$ of
Proposition $5.3$ on pp. $140$ of \cite{[S]}.)  Furthermore, if
$\tau\in\Omega^1(M)$, then (\ref{St}) becomes our familiar identity
$$d\tau(X,Y)=X\big(\tau(Y)\big)-Y\big(\tau(X)\big)-\tau([X,Y]),$$
by the vanishing of $d\big(X\lrcorner(Y\lrcorner\tau)\big)$ in
(\ref{St}).
\end{rema}

\begin{rema} Another proof of this formula is to use formula for
covariant derivative. Without loss of generality, we just consider
the special case that $V$ is a line bundle. We denote by $\theta$
and $\tau$ the connection $1$-form (matrix) with respect to the
connection $\nabla$ and a form of degree $k$ on $M$ respectively. By
definition, the covariant derivative of $\tau\otimes
s\in\Omega^k(V)$ is given by
\begin{equation}\label{cd}
\nabla(\tau\otimes s)=d\tau\otimes s+(-1)^k\tau\wedge\nabla s,
\end{equation}
where $s$ is a smooth section of $V$. Firstly, we can easily check
that
\begin{equation}\label{cd1}
Y\lrcorner\big(X\lrcorner(\tau\wedge\theta)\big)=\big(X\lrcorner(Y\lrcorner\tau)\big)\wedge\theta-X\lrcorner\big((Y\lrcorner\tau)\wedge\theta\big)+Y\lrcorner\big((X\lrcorner\tau)\wedge\theta\big).
\end{equation}
Actually, it is easy to know via a direct calculation that
\begin{align*}
   LHS
 &=Y\lrcorner\big((X\lrcorner\tau)\wedge\theta+(-1)^k\tau\wedge(X\lrcorner\theta)\big)\\
 &=Y\lrcorner\big((X\lrcorner\tau)\wedge\theta\big)+(-1)^k\Big(X\lrcorner\big(\theta\wedge(Y\lrcorner\tau)\big)-(-1)\theta\wedge\big(X\lrcorner(Y\lrcorner\tau)\big)\Big)\\
 &=Y\lrcorner\big((X\lrcorner\tau)\wedge\theta\big)+(-1)^k(-1)^{k-1}X\lrcorner\big((Y\lrcorner\tau)\wedge\theta\big)+(-1)^{k+k-2}\big(X\lrcorner(Y\lrcorner\tau)\big)\wedge\theta\\
 &=RHS.
\end{align*}
Then, without loss of generality, assuming that $s$ is a smooth
local frame of the smooth line bundle $V$, by adding up the tensor
products (\ref{cd1})$\otimes (-1)^ks$ and (\ref{St})$\otimes s$, we
have reproved our formula (\ref{cf}) according to the formula of
covariant derivative (\ref{cd}).
\end{rema}

\vspace{1cm}
\section{Commutator formula on complex manifolds} In this section, we consider an $n$-dimensional complex manifold
${M}$ and a holomorphic vector bundle $V$ over it. As the
applications of our commutator formula, we derive a general
commutator identity for any $V$-valued $(n,*)$-form and also any
$V$-valued $(*,*)$-form on $M$, and as an easy consequence we derive
the Tian-Todorov lemma. Unless otherwise mentioned, $\nabla$ will
always denote the Chern connection of the Hermitian holomorphic
vector bundle $V$ throughout this section.

Following pp. $152$ of \cite{[MK]}, we first introduce some notations. As usual, we let $A^{p, q}(V):=A^{p, q}({M},V)$ be the
space of smooth $(p, q)$-forms with coefficients in $V$. If
$X=\sum_{i=1}^nX^i\partial_i$ and $Y=\sum_{i=1}^nY^i\partial_i$,
then
$$[X,Y]=\sum_{i,j}(X^i\partial_iY^j-Y^i\partial_iX^j)\partial_j.$$
For the generalization, let
$$\varphi^i=\frac{1}{p!}\sum\varphi^i_{\bar{j}_1,\cdots,\bar{j}_p}d\bar{z}^{j_1}\wedge\cdots\wedge d\bar{z}^{j_p}$$
and
$$\psi^i=\frac{1}{q!}\sum\psi^i_{\bar{k}_1,\cdots,\bar{k}_q}d\bar{z}^{k_1}\wedge\cdots\wedge d\bar{z}^{k_q}.$$

\begin{defi} \label{br} For $\varphi=\sum_{i}\varphi^i\otimes\partial_i$
and $\psi=\sum_{i}\psi^i\otimes\partial_i$, we define
$$[\varphi,\psi]=\sum_{i,j=1}^n\big(\varphi^i\wedge\partial_i\psi^j-(-1)^{pq}\psi^i\wedge\partial_i\varphi^j\big)\otimes\partial_j,$$
where
$$\partial_i\varphi^j=\frac{1}{p!}\sum\partial_i\varphi^j_{\bar{j}_1,\cdots,\bar{j}_p}d\bar{z}^{j_1}\wedge\cdots\wedge
d\bar{z}^{j_p}$$ and similarly for $\partial_i\psi^j$. In
particular, if $\varphi,\psi\in A^{0,1}({M},T^{1,0}_{{M}})$, then
$$[\varphi,\psi]=\sum_{i,j=1}^n(\varphi^i\wedge\partial_i\psi^j+\psi^i\wedge\partial_i\varphi^j)\otimes\partial_j.$$
\end{defi}

With the setup above, we have the following general commutator
formula for $V$-valued $(n,*)$-forms on the complex manifold ${M}$.

\begin{prop}\label{PgTT}
For any holomorphic vector bundle $V$, any $\omega\otimes s\in
A^{n,*}(V)$\footnote{Here $\omega\in A^{n,*}(M)$ and $s$ is a smooth
section of $V$. In the following , we will adopt this convention
without chance for confusion.} and any $\phi_i\in
A^{0,1}({M},T^{1,0}_{{M}})$, $i=1,2$, there holds
\begin{equation}\label{gTT}
[\phi_1,\phi_2]\lrcorner(\omega\otimes
s)=\big(\phi_1\lrcorner\partial(\phi_2\lrcorner\omega)\big)\otimes
s-\partial\big(\phi_2\lrcorner(\phi_1\lrcorner\omega)\big)\otimes
s+\big(\phi_2\lrcorner\partial(\phi_1\lrcorner\omega)\big)\otimes s,
\end{equation}
or equivalently,
\begin{equation}\label{TT}
[\phi_1,\phi_2]\lrcorner\omega=\phi_1\lrcorner\partial(\phi_2\lrcorner\omega)-\partial\big(\phi_2\lrcorner(\phi_1\lrcorner\omega)\big)+\phi_2\lrcorner\partial(\phi_1\lrcorner\omega).
\end{equation}
\end{prop}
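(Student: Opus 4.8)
The plan is to deduce the proposition from the elementary commutator formula of Lemma~\ref{cf}, applied to coordinate vector fields, after first reducing to the scalar statement (\ref{TT}).

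To see that (\ref{gTT}) and (\ref{TT}) are equivalent I would write $\omega\otimes s$ with $s$ a local holomorphic frame of $V$, so that the $(1,0)$-part of the Chern connection acts by $\partial(\eta\otimes s)=\partial\eta\otimes s+(-1)^{\deg\eta}\,\eta\wedge\theta\otimes s$, with $\theta$ the connection $1$-form. Substituting this into the three terms on the right of (\ref{gTT}) and collecting the terms carrying $\theta$, one checks --- exactly as in the second Remark above, via an identity of the shape (\ref{cd1}) now written for contractions by $T^{1,0}_M$-valued forms --- that these $\theta$-contributions cancel. Hence it suffices to prove (\ref{TT}) for a scalar $(n,q)$-form $\omega$.

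Next I would single out the special case of Lemma~\ref{cf} that drives everything. Since $\omega$ is of maximal holomorphic degree, $\partial\omega=0$, while $[\partial_i,\partial_j]=0$ for coordinate fields. Taking $X=\partial_i$, $Y=\partial_j$ and $\nabla=\partial$ in (\ref{cf}) --- equivalently, observing that $\iota_{\partial_i}\partial+\partial\iota_{\partial_i}$ is the degree-zero derivation $\partial/\partial z^i$ on coefficients, hence commutes with $\iota_{\partial_j}$ --- yields, for all $i,j$,
\begin{equation}\label{keyid}
\iota_{\partial_i}\partial(\iota_{\partial_j}\omega)+\partial(\iota_{\partial_i}\iota_{\partial_j}\omega)-\iota_{\partial_j}\partial(\iota_{\partial_i}\omega)=0 .
\end{equation}
Then, to prove (\ref{TT}), I would write $\phi_\alpha=\sum_i\phi_\alpha^i\otimes\partial_i$ ($\alpha=1,2$) in a coordinate chart, so that contraction by $\phi_\alpha$ is $\sum_i\phi_\alpha^i\wedge\iota_{\partial_i}$, and expand the three summands on the right of (\ref{TT}) using the graded Leibniz rules for $\partial$ and for interior products (together with $\iota_{\partial_i}\phi_\alpha^j=0$ and $\iota_{\partial_i}\partial\phi_\alpha^j=\partial_i\phi_\alpha^j$). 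The resulting terms split into two groups. The terms producing a directional derivative $\partial_i\phi_\alpha^j$ of a coefficient reassemble, after relabelling, into $\sum_{i,j}\big(\phi_1^i\wedge\partial_i\phi_2^j+\phi_2^i\wedge\partial_i\phi_1^j\big)\wedge\iota_{\partial_j}\omega$, which by Definition~\ref{br} (with $p=q=1$, so that $-(-1)^{pq}=+1$) is precisely $[\phi_1,\phi_2]\lrcorner\omega$. The remaining terms cancel: those built on $\iota_{\partial_i}\iota_{\partial_j}\omega$ with $\partial$ on a coefficient cancel in pairs under the swap $i\leftrightarrow j$ (using $\iota_{\partial_i}\iota_{\partial_j}=-\iota_{\partial_j}\iota_{\partial_i}$), while those with $\partial$ falling on $\iota_{\partial_i}\iota_{\partial_j}\omega$ or on $\iota_{\partial_j}\omega$ collect, for each fixed pair $i,j$, into $\phi_1^i\wedge\phi_2^j$ wedged against the left-hand side of (\ref{keyid}), hence vanish. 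This gives (\ref{TT}), and with it (\ref{gTT}).

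The hard part will be the sign bookkeeping in the final step: each use of the graded Leibniz rule, each interior product, and each relabelling $i\leftrightarrow j$ introduces Koszul signs which must be reconciled with the $(-1)^{pq}$ in Definition~\ref{br}; with $p=q=1$ this is under control but must be carried out with care. A secondary point to verify is the cancellation of the connection-form terms in the reduction of (\ref{gTT}) to (\ref{TT}), which is where the holomorphicity of $V$ enters.
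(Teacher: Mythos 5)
Your argument is correct, but it follows a genuinely different route from the paper's. The paper decomposes each $\phi_i$ along its antiholomorphic form indices, $\phi_i=(\phi_i)_{\bar k}\otimes d\bar z^k$, and applies the commutator formula (\ref{St}) to the genuine $(1,0)$-vector fields $(\phi_1)_{\bar k_1},(\phi_2)_{\bar k_2}$ acting on $\tau=\omega\wedge d\bar z^{k_1}\wedge d\bar z^{k_2}$; there the bracket of Definition \ref{br} is produced in one stroke by the Lie bracket of these vector fields, and the unwanted fourth term is killed for type reasons because $\partial$ of an $(n,*)$-form vanishes. You instead decompose along the holomorphic vector directions, $\phi_\alpha=\sum_i\phi^i_\alpha\otimes\partial_i$, invoke only the degenerate case of the Cartan-type formula for the commuting coordinate fields $\partial_i,\partial_j$ (your key identity, which again relies on $\partial\omega=0$), and recover the bracket of Definition \ref{br} from the Leibniz terms in which the derivative falls on the coefficients $\phi^j_\alpha$. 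I checked the sign bookkeeping you flag as the delicate point: the terms carrying $\partial\phi^i_1$ or $\partial\phi^j_2$ cancel against the expansion of $-\partial\big(\phi_2\lrcorner(\phi_1\lrcorner\omega)\big)$, and the remaining terms collect, for each pair $(i,j)$, into $\phi^i_1\wedge\phi^j_2$ wedged with your vanishing coordinate-field identity, so the computation closes up and yields exactly (\ref{TT}) with the $p=q=1$ bracket of Definition \ref{br}. Your version is more elementary (only the trivial-bracket case of the commutator formula is used, and the role of $\partial\omega=0$ is completely explicit), while the paper's is shorter. One simplification you should make: the preliminary reduction of (\ref{gTT}) to (\ref{TT}) via the connection form $\theta$ is unnecessary, since in (\ref{gTT}) the contractions and the $\partial$'s act only on the form factor $\omega$, so (\ref{gTT}) is literally (\ref{TT}) tensored with $s$; the $\theta$-cancellation you describe is a true identity (it is essentially (\ref{con})), but it belongs to the bundle-valued statement of Theorem \ref{TTh}, where the Chern connection genuinely enters, not to this Proposition.
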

\begin{proof}We first show the following identity
$$[X,Y]\lrcorner(\tau\otimes s)=\big(X\lrcorner d(Y\lrcorner\tau)\big)\otimes s+d\big(X\lrcorner(Y\lrcorner\tau)\big)\otimes s-\big(Y\lrcorner(X\lrcorner d\tau)\big)\otimes s-\big(Y\lrcorner d(X\lrcorner\tau)\big)\otimes
 s,\quad \text{for $\tau\otimes s\in A^{n,*}(V)$}.$$
 One way to approach this identity is a direct application of (\ref{St}), while here we
adopt a lengthy but more intrinsic proof. In fact, we have
\begin{align*}
  &[X,Y]\lrcorner(\tau\otimes s)\\
 =&X\lrcorner\nabla\big(Y\lrcorner(\tau\otimes s)\big)+\nabla\Big(X\lrcorner\big(Y\lrcorner(\tau\otimes s)\big)\Big)-Y\lrcorner\big(X\lrcorner\nabla(\tau\otimes s)\big)-Y\lrcorner\nabla\big(X\lrcorner(\tau\otimes s)\big)\\
 =&\big(X\lrcorner d(Y\lrcorner\tau)\big)\otimes s+(-1)^{n-1}\big(X\lrcorner(Y\lrcorner\tau)\big)\otimes \nabla s+d\big(X\lrcorner(Y\lrcorner\tau)\big)\otimes s+(-1)^{n}\big(X\lrcorner(Y\lrcorner\tau)\big)\otimes \nabla s\\
  &-\big(Y\lrcorner(X\lrcorner d\tau)\big)\otimes s-(-1)^{n}\big(Y\lrcorner(X\lrcorner\tau)\big)\otimes\nabla s-\big(Y\lrcorner d(X\lrcorner\tau)\big)\otimes s-(-1)^{n-1}\big(Y\lrcorner(X\lrcorner\tau)\big)\otimes \nabla s\\
 =&\big(X\lrcorner d(Y\lrcorner\tau)\big)\otimes s+d\big(X\lrcorner(Y\lrcorner\tau)\big)\otimes s-\big(Y\lrcorner(X\lrcorner d\tau)\big)\otimes s-\big(Y\lrcorner d(X\lrcorner\tau)\big)\otimes
 s,
\end{align*}
where the first equality applies our general commutator formula
(\ref{cf}).

Then, if we take $\tau$ as $\omega\wedge d\bar{z}^{k_1}\wedge
d\bar{z}^{k_2}$ and $X,Y$ as
$(\phi_1)_{\bar{k}_1},(\phi_2)_{\bar{k}_2}$, respectively, we can
conclude the proof of (\ref{gTT}). In fact, set
$$\phi_i=(\phi_i)_{\bar{k}}\otimes d\bar{z}^k,$$
where
$(\phi_i)_{\bar{k}}=\sum^n_{j=1}(\phi_i)_{\bar{k}}^j\frac{\partial}{\partial
z^j}$ is a vector field of type $(1,0)$. Then by definition it is
easy to check that
$$[\phi_1,\phi_2]=[(\phi_1)_{\bar{k}_1},(\phi_2)_{\bar{k}_2}]\otimes(d\bar{z}^{k_1}\wedge d\bar{z}^{k_2})$$
and thus
$$[\phi_1,\phi_2]\lrcorner\omega=\big([(\phi_1)_{\bar{k}_1},(\phi_2)_{\bar{k}_2}]\lrcorner\omega\big)\wedge
(d\bar{z}^{k_1}\wedge
d\bar{z}^{k_2})=[(\phi_1)_{\bar{k}_1},(\phi_2)_{\bar{k}_2}]\lrcorner(\omega\wedge
d\bar{z}^{k_1}\wedge d\bar{z}^{k_2}).$$ Hence, by applying our
formula (\ref{cf}) in the special case (i.e., the commutator formula
(\ref{St}) as pointed out in Remark \ref{Stf}) to the complex
setting, and taking $\tau=\omega\wedge d\bar{z}^{k_1}\wedge
d\bar{z}^{k_2}$ and only the components of $(n-1,2)$-type forms, one
has
\begin{equation}\label{GTT}
\begin{aligned}
   &[(\phi_1)_{\bar{k}_1},(\phi_2)_{\bar{k}_2}]\lrcorner(\omega\wedge d\bar{z}^{k_1}\wedge d\bar{z}^{k_2})\\
 =\quad &(\phi_1)_{\bar{k}_1}\lrcorner\partial\big((\phi_2)_{\bar{k}_2}\lrcorner(\omega\wedge d\bar{z}^{k_1}\wedge
 d\bar{z}^{k_2})\big)+\partial\Big((\phi_1)_{\bar{k}_1}\lrcorner\big((\phi_2)_{\bar{k}_2}\lrcorner(\omega\wedge d\bar{z}^{k_1}\wedge d\bar{z}^{k_2})\big)\Big)\\
 -&(\phi_2)_{\bar{k}_2}\lrcorner\partial\big((\phi_1)_{\bar{k}_1}\lrcorner(\omega\wedge d\bar{z}^{k_1}\wedge
 d\bar{z}^{k_2})\big) -(\phi_2)_{\bar{k}_2}\lrcorner\big((\phi_1)_{\bar{k}_1}\lrcorner\partial(\omega\wedge d\bar{z}^{k_1}\wedge d\bar{z}^{k_2})\big).
\end{aligned}
\end{equation}
It is not difficult to know via a simple calculation that
$$[\phi_1,\phi_2]\lrcorner\omega=\phi_1\lrcorner\partial(\phi_2\lrcorner\omega)-\partial\big(\phi_2\lrcorner(\phi_1\lrcorner\omega)\big)+\phi_2\lrcorner\partial(\phi_1\lrcorner\omega),$$
by the vanishing of the last term of right-hand side of (\ref{GTT}).

\end{proof}

\begin{rema}\label{RPgTT}
It is interesting to write down the following several useful
identities from the proof of (\ref{TT}) and (\ref{gTT}): for any
$\omega\otimes s\in A^{*,*}(V)$,
$$\phi_1\lrcorner\bar{\partial}(\phi_2\lrcorner\omega)-\bar{\partial}\big(\phi_2\lrcorner(\phi_1\lrcorner\omega)\big)+\phi_2\lrcorner\bar{\partial}(\phi_1\lrcorner\omega)-\phi_2\lrcorner(\phi_1\lrcorner\bar{\partial}\omega)=0,$$
or equivalently,
\begin{equation}\label{bar}
\big(\phi_1\lrcorner\bar{\partial}(\phi_2\lrcorner\omega)\big)\otimes
s-\bar{\partial}\big(\phi_2\lrcorner(\phi_1\lrcorner\omega)\big)\otimes
s+\big(\phi_2\lrcorner\bar{\partial}(\phi_1\lrcorner\omega)\big)\otimes
s-\big(\phi_2\lrcorner(\phi_1\lrcorner\bar{\partial}\omega)\big)\otimes
s=0,
\end{equation}
and the (more) complete general commutator formula
\begin{equation}\label{cgTT2}
[\phi_1,\phi_2]\lrcorner\omega=\phi_1\lrcorner\partial(\phi_2\lrcorner\omega)-\partial\big(\phi_2\lrcorner(\phi_1\lrcorner\omega)\big)
+\phi_2\lrcorner\partial(\phi_1\lrcorner\omega)-\phi_2\lrcorner(\phi_1\lrcorner\partial\omega),
\end{equation}
or equivalently,
\begin{equation}\label{cgTT}
[\phi_1,\phi_2]\lrcorner(\omega\otimes
s)=\big(\phi_1\lrcorner\partial(\phi_2\lrcorner\omega)\big)\otimes
s-\partial\big(\phi_2\lrcorner(\phi_1\lrcorner\omega)\big)\otimes
s+\big(\phi_2\lrcorner\partial(\phi_1\lrcorner\omega)\big)\otimes s
-\phi_2\lrcorner(\phi_1\lrcorner\partial\omega)\otimes s.
\end{equation}
Moreover, there holds
$$[\phi_1,\phi_2]\lrcorner\omega=\phi_1\lrcorner d(\phi_2\lrcorner\omega)-d\big(\phi_2\lrcorner(\phi_1\lrcorner\omega)\big)
+\phi_2\lrcorner
d(\phi_1\lrcorner\omega)-\phi_2\lrcorner(\phi_1\lrcorner d\omega).$$
or equivalently,
\begin{equation}\label{dTT}
[\phi_1,\phi_2]\lrcorner(\omega\otimes s)=\big(\phi_1\lrcorner
d(\phi_2\lrcorner\omega)\big)\otimes
s-d\big(\phi_2\lrcorner(\phi_1\lrcorner\omega)\big)\otimes
s+\big(\phi_2\lrcorner d(\phi_1\lrcorner\omega)\big)\otimes s
-\phi_2\lrcorner(\phi_1\lrcorner d\omega)\otimes s.
\end{equation}
\end{rema}

Based on the argument above, we obtain another general commutator
identity for Hermitian holomorphic vector bundle valued
$(n,*)$-forms on complex manifolds.

\begin{theo}\label{TTh}
For any Hermitian holomorphic vector bundle $V$, any $\eta\in
A^{n,*}(V)$ and any $\phi_i\in A^{0,1}({M},T^{1,0}_{{M}})$, $i=1,2$,
$$[\phi_1,\phi_2]\lrcorner\eta=\phi_1\lrcorner\nabla(\phi_2\lrcorner\eta)-\nabla\big(\phi_2\lrcorner(\phi_1\lrcorner\eta)\big)+\phi_2\lrcorner\nabla(\phi_1\lrcorner\eta)-\phi_2\lrcorner(\phi_1\lrcorner\nabla\eta),$$
where $\nabla$ is the Chern connection of the Hermitian holomorphic
vector bundle $V$.
\end{theo}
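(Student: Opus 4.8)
The plan is to localize, pass to a local holomorphic frame of $V$, and reduce the assertion to the scalar identities of Remark \ref{RPgTT}, isolating the contribution of the connection form of the Chern connection.

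First I would use that the identity is local on $M$: choose a local holomorphic frame $s$ of $V$ and, assuming $V$ to be a line bundle (the reduction to this case being exactly as in the second remark of Section 2), write $\eta=\omega\otimes s$ with $\omega\in A^{n,*}(M)$. Since $s$ is holomorphic and $\nabla$ is the Chern connection, $\nabla s=\theta\otimes s$ with $\theta$ a connection form of pure type $(1,0)$, so that for every scalar differential form $\beta$,
\[
\nabla(\beta\otimes s)=d\beta\otimes s+(-1)^{\deg\beta}(\beta\wedge\theta)\otimes s .
\]
In particular, as $\omega$ has type $(n,*)$ and $\theta$ type $(1,0)$, we have $\omega\wedge\theta=0$, and therefore $\nabla\eta=d\omega\otimes s=\bar\partial\omega\otimes s=\bar\partial\eta$ — the frame-version of the fact that $\partial^{\nabla}$ kills $A^{n,*}(V)$.

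Then I would substitute the displayed Leibniz formula into each of the four terms on the right of the asserted identity. Each term splits into a de Rham part, in which $\nabla$ is replaced by $d$ acting on the scalar form and tensored with $s$, and a connection part, which is a contraction by $\phi_1$ or $\phi_2$ of a wedge product with $\theta$. The four de Rham parts add up exactly to the right-hand side of the scalar identity (\ref{dTT}) (tensored with $s$), and by (\ref{dTT}) this sum equals $[\phi_1,\phi_2]\lrcorner(\omega\otimes s)=[\phi_1,\phi_2]\lrcorner\eta$. Hence it remains to show that the four connection parts add up to zero.

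This is the crux. The connection part of the term $\phi_2\lrcorner(\phi_1\lrcorner\nabla\eta)$ vanishes immediately because $\omega\wedge\theta=0$. The three remaining connection parts all carry the same form-degree, and after factoring the common sign one is reduced to the pointwise algebraic identity
\[
\phi_1\lrcorner\big((\phi_2\lrcorner\omega)\wedge\theta\big)-\big(\phi_2\lrcorner(\phi_1\lrcorner\omega)\big)\wedge\theta+\phi_2\lrcorner\big((\phi_1\lrcorner\omega)\wedge\theta\big)=0 .
\]
I would prove this by writing each contraction by an element of $A^{0,1}(M,T^{1,0}_{M})$ as an ordinary interior product followed by a wedge with its $(0,1)$-coefficient, then applying the algebraic commutator identity (\ref{cd1}) together with the graded symmetry $\phi_1\lrcorner(\phi_2\lrcorner\,\cdot\,)=\phi_2\lrcorner(\phi_1\lrcorner\,\cdot\,)$ for such operators and, once more, $\omega\wedge\theta=0$ (which removes the term that would otherwise sit on the left-hand side of (\ref{cd1})). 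The bookkeeping of signs and of the $d\bar z$-factors in this expansion is routine but is precisely where all the care is needed, and I expect it to be the only genuinely delicate point. As an alternative organization of the same argument one may split $\nabla=\partial^{\nabla}+\bar\partial$: the four $\bar\partial$-contributions cancel by identity (\ref{bar}) of Remark \ref{RPgTT}, while the three $\partial^{\nabla}$-contributions, compared with the scalar identity (\ref{gTT}) of Proposition \ref{PgTT} tensored with $s$, reduce to exactly the same algebraic identity for the connection terms.
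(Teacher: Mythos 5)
Your argument is correct and follows the same overall skeleton as the paper's proof: localize in a local holomorphic frame, split $\nabla$ via the Leibniz rule (\ref{cd}) into a de Rham part and a connection-form part, absorb the de Rham part by the scalar identities of Section 3 (you invoke (\ref{dTT}); the paper equivalently uses (\ref{gTT}) together with (\ref{bar})), use $\omega\wedge\theta=0$ for type reasons to kill the connection contribution of $\phi_2\lrcorner(\phi_1\lrcorner\nabla\eta)$, and reduce everything to the cancellation of the three remaining connection terms, which is precisely the paper's identity (\ref{con}). The one genuine difference is how that cancellation is established. You propose to prove it as a pointwise algebraic identity, via (\ref{cd1}) with the left-hand side annihilated by $\omega\wedge\theta=0$, the symmetry $\phi_1\lrcorner(\phi_2\lrcorner\,\cdot\,)=\phi_2\lrcorner(\phi_1\lrcorner\,\cdot\,)$, and careful tracking of the $d\bar z$-factors (which is indeed where the sign of the third term flips relative to the vector-field case, and your signs come out right). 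The paper instead obtains (\ref{con}) without any such bookkeeping by a substitution trick: it applies the already-proved scalar identity (\ref{TT}) to $f\omega_i$, peels off the $\partial f$-terms to get (\ref{TTf}), and then uses that the Chern connection form $\theta_{ij}=\sum_k\partial h_{i\bar k}\,h^{\bar k j}$ is locally a combination of terms $g\,\partial f$. Your route is more elementary and makes transparent that the only property of $\nabla$ used is that its connection matrix is of type $(1,0)$ in a holomorphic frame; the paper's route buys economy, recycling (\ref{TT}) so that no new sign computation is needed. One small caveat: your reduction "assume $V$ is a line bundle" should really be phrased frame-wise for rank $r\geq 2$, since $\nabla s_i=\sum_j\theta_{ij}\otimes s_j$ mixes components; this is harmless because your cancellation identity is applied separately for each $(1,0)$-form $\theta_{ij}$ and one then sums over $i,j$, exactly as the paper does.
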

\begin{proof}
Let $r=\rank (V)$. Assume that $s=\{s_1,\cdots,s_r\}$ is a local
holomorphic frame of $V$ and that $h=(h_{i\bar{j}})=(h(s_i,s_j))$ is
the matrix of the metric of $h$ under $s$. Without loss of
generality, we can locally set
\begin{equation}\label{assum}
\eta=\sum_{i=1}^r\omega_i\otimes s_i\quad \text{and}\quad \nabla
s_i=\sum_{j=1}^r\theta_{ij}\otimes s_j,
\end{equation}
where $\omega_i\in A^{n,*}(M)$ and $\theta_{ij}=\sum_{k=1}^r\partial
h_{i\bar{k}}\cdot h^{\bar{k}j}$ is the connection $(1,0)$-form of
$\nabla$ with respect to $s$.

Now we proceed to our proof. Firstly, we note that for any two
functions $f$ and $g$ on the complex manifold ${M}$, letting
$f\omega_i$ substitute for $\omega$ in (\ref{TT}), one has
$$
[\phi_1,\phi_2]\lrcorner
f\omega_i=-\partial\big(\phi_2\lrcorner(\phi_1\lrcorner
f\omega_i)\big)+\phi_1\lrcorner\partial(\phi_2\lrcorner
f\omega_i)+\phi_2\lrcorner\partial(\phi_1\lrcorner f\omega_i).
$$
So by (\ref{TT}) we have
$$
 0=-\partial f\wedge\big(\phi_2\lrcorner (\phi_1\lrcorner
\omega_i)\big)+\phi_1\lrcorner\big(\partial f\wedge(\phi_2\lrcorner
\omega_i)\big)+\phi_2\lrcorner\big(\partial f\wedge(\phi_1\lrcorner
\omega_i)\big)
$$
and its equivalent form
\begin{equation}\label{TTf}
 0=-g\cdot\partial f\wedge\big(\phi_2\lrcorner (\phi_1\lrcorner
\omega_i)\big)\otimes s_j+\phi_1\lrcorner\big(g\cdot\partial
f\wedge(\phi_2\lrcorner \omega_i)\big)\otimes
s_j+\phi_2\lrcorner\big(g\cdot\partial f\wedge(\phi_1\lrcorner
\omega_i)\big)\otimes s_j.
\end{equation}

Next, submitting  $\partial h_{i\bar{k}}\cdot h^{\bar{k}j}$ into
(\ref{TTf}) as $g\cdot\partial f$ and taking sums over $k$ and then
over $j$, then we obtain
\begin{equation}\label{con}
 0=-\nabla s_i\wedge\big(\phi_2\lrcorner (\phi_1\lrcorner
\omega_i)\big)+\phi_1\lrcorner\big(\nabla s_i\wedge(\phi_2\lrcorner
\omega_i)\big)+\phi_2\lrcorner\big(\nabla s_i\wedge(\phi_1\lrcorner
\omega_i)\big)
\end{equation}
according to (\ref{assum}).

Then, combining  (\ref{con}), (\ref{gTT}) and (\ref{bar}) with
$\omega$ and $s$ replaced by $\omega_i$ and $s_i$ respectively,  and
summing over $i$, we can complete our proof according to the formula
(\ref{cd}) for covariant derivative and the assumption
(\ref{assum}).

\end{proof}

Actually, we can easily generalize Theorem \ref{TTh} above to any
$\eta\in A^{*,*}(V)$.

\begin{coro}\label{TT2}
For any Hermitian holomorphic vector bundle $V$, any $\eta\in
A^{*,*}(V)$ and any $\phi_i\in A^{0,1}({M},T^{1,0}_{{M}})$, $i=1,2$,
there holds
\begin{equation}\label{acTT}
[\phi_1,\phi_2]\lrcorner\eta=\phi_1\lrcorner\nabla(\phi_2\lrcorner\eta)-\nabla\big(\phi_2\lrcorner(\phi_1\lrcorner\eta)\big)+\phi_2\lrcorner\nabla(\phi_1\lrcorner\eta)-\phi_2\lrcorner(\phi_1\lrcorner\nabla\eta),
\end{equation}
where $\nabla$ is the Chern connection of the Hermitian holomorphic
vector bundle $V$.
\end{coro}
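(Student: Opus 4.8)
The plan is to reproduce the proof of Theorem \ref{TTh} almost verbatim, the single new feature being that one extra term must be carried through at every step. The guiding observation is that the hypothesis $\eta\in A^{n,*}(V)$ in Theorem \ref{TTh}, like the hypothesis $\omega\in A^{n,*}(M)$ in Proposition \ref{PgTT}, was used \emph{only} to force the vanishing of terms of the shape $\phi_2\lrcorner(\phi_1\lrcorner\partial\omega)$, since $\partial$ annihilates forms of top holomorphic degree. For an arbitrary $\eta\in A^{*,*}(V)$ one simply keeps these terms; they reorganize into the last summand $-\phi_2\lrcorner(\phi_1\lrcorner\nabla\eta)$ of (\ref{acTT}), which is already displayed in the statement. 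So nothing conceptually new is required beyond Theorem \ref{TTh}; only the bookkeeping changes.

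First I would record the two scalar ingredients in arbitrary bidegree. The $d$-version (\ref{dTT}),
$$[\phi_1,\phi_2]\lrcorner\omega=\phi_1\lrcorner d(\phi_2\lrcorner\omega)-d\big(\phi_2\lrcorner(\phi_1\lrcorner\omega)\big)+\phi_2\lrcorner d(\phi_1\lrcorner\omega)-\phi_2\lrcorner(\phi_1\lrcorner d\omega),$$
holds for every $\omega\in A^{*,*}(M)$: it is the sum of the $\partial$-version (\ref{cgTT2}) and the $\bar\partial$-version (\ref{bar}), both of which were already noted in Remark \ref{RPgTT} for $A^{*,*}(V)$; in any case both follow by applying (\ref{St}) to $\tau=\omega\wedge d\bar z^{k_1}\wedge d\bar z^{k_2}$ and extracting the components of the appropriate bidegree exactly as in the proof of Proposition \ref{PgTT}, the only difference now being that $\partial(\omega\wedge d\bar z^{k_1}\wedge d\bar z^{k_2})$ need not vanish and produces the last term $-\phi_2\lrcorner(\phi_1\lrcorner\partial\omega)$.

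Next I would carry out the Chern-connection reduction of Theorem \ref{TTh}. Fix a local holomorphic frame $s=\{s_1,\dots,s_r\}$ of $V$ and write, as in (\ref{assum}), $\eta=\sum_i\omega_i\otimes s_i$ and $\nabla s_i=\sum_j\theta_{ij}\otimes s_j$, now with $\omega_i\in A^{*,*}(M)$ of arbitrary bidegree. Substituting $f\omega_i$ for $\omega$ in (\ref{cgTT2}) and isolating the terms containing $\partial f$, then replacing $g\,\partial f$ by $\partial h_{i\bar k}\cdot h^{\bar k j}$ and summing over $k$ and then over $j$ as before, yields the generalization of (\ref{con}):
$$0=-\nabla s_i\wedge\big(\phi_2\lrcorner(\phi_1\lrcorner\omega_i)\big)+\phi_1\lrcorner\big(\nabla s_i\wedge(\phi_2\lrcorner\omega_i)\big)+\phi_2\lrcorner\big(\nabla s_i\wedge(\phi_1\lrcorner\omega_i)\big)-\phi_2\lrcorner\Big(\phi_1\lrcorner\big(\nabla s_i\wedge\omega_i\big)\Big),$$
the new fourth summand being precisely the one that dropped out of (\ref{con}) because $\partial f\wedge\omega_i=0$ when $\omega_i$ has top holomorphic degree. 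Combining this with (\ref{dTT}) applied to each $\omega_i$, through the covariant-derivative formula (\ref{cd}) in the form $\nabla(\omega_i\otimes s_i)=d\omega_i\otimes s_i+(-1)^{\deg\omega_i}\omega_i\wedge\nabla s_i$, and summing over $i$, one arrives at (\ref{acTT}); the two powers $(-1)^{\deg\omega_i}$ that occur (one from (\ref{cd}), one from commuting $\theta_{ij}$ past $\omega_i$) cancel, so the dependence of this sign on the bidegree of $\omega_i$ is harmless, exactly as for $\eta\in A^{n,*}(V)$ in Theorem \ref{TTh}. The only genuine work is the bookkeeping of signs and of the new terms; the main, but entirely routine, obstacle is to verify that the three contributions $-\phi_2\lrcorner(\phi_1\lrcorner\partial\omega_i)$ from (\ref{cgTT2}), $-\phi_2\lrcorner(\phi_1\lrcorner\bar\partial\omega_i)$ from (\ref{bar}), and $-\phi_2\lrcorner\big(\phi_1\lrcorner(\nabla s_i\wedge\omega_i)\big)$ from the generalized (\ref{con}) do assemble, after summation over $i$, into the single term $-\phi_2\lrcorner(\phi_1\lrcorner\nabla\eta)$.
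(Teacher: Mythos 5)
Your proposal is correct and takes essentially the same route as the paper: the paper's own proof of this corollary just says to rerun the computation of Theorem \ref{TTh} based on the identities (\ref{cgTT}) and (\ref{dTT}), leaving the details to the reader. Your write-up supplies exactly those details — the $f$-linearity argument applied to (\ref{cgTT2}) to get the generalization of (\ref{con}) with the extra fourth term, and the sign bookkeeping showing the connection-form contributions cancel in arbitrary bidegree — so it is a faithful completion of the paper's intended argument rather than a different method.
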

\begin{proof}
Based on the identities (\ref{cgTT}) and (\ref{dTT}), we can obtain
(\ref{acTT}) by the same computation as we use to prove Theorem
\ref{TTh} and the details are left to the readers.
\end{proof}

Next, following the paper of S. Barannikov and M. Kontsevich
\cite{[BK]}, we can present some reformulation of the above results
as follows. Let us fix a $(k,l)$-form $\omega\in A^{k,l}(M)$. It
induces a linear map\footnote{In our manuscript, this map and also
the following map $\phi\mapsto\phi\lrcorner\eta$ were mistaken as
two isomorphisms, which is kindly pointed out by the referee.}
$$A^{0,q}(M,\bigwedge^pT^{1,0}_{M})\longrightarrow A^{k-p,q+l}(M)\footnote{For convention, here we set $k\geq p$.}:\phi\mapsto\phi\lrcorner\omega.$$
We define a map $\Delta_\omega$ from $\mathbf{t}$ to $A^{*,*}(M)$ by
the formula
$$\Delta_\omega\phi:=\partial(\phi\lrcorner\omega).$$
Similarly, let us fix a $V$-valued $(k,l)$-form $\eta\in
A^{k,l}(V)$. It induces a linear map
$$A^{0,q}(M,\bigwedge^pT^{1,0}_{M})\longrightarrow A^{k-p,q+l}(V):\phi\mapsto\phi\lrcorner\eta.$$
Then, a map $\diamond_\eta$ from $\mathbf{t}$ to $A^{*,*}(V)$ is
defined as the formula
$$\diamond_\eta\phi:=\nabla(\phi\lrcorner\eta).$$
Here $\mathbf{t}$ is the differential graded Lie algebras given by
$$\mathbf{t}=\bigoplus_k\mathbf{t}^k,\quad \mathbf{t}^k=\bigoplus_{p+q-1=k}A^{0,q}(M,\bigwedge^pT^{1,0}_{M}),$$
endowed with the differential $\bar{\partial}$, and the bracket
coming from the cup-product on $\bar{\partial}$-forms and the
standard Schouten-Nijenhuys bracket on polyvector fields.

Then we can generalize and restate Proposition \ref{PgTT}, and
restate Identity (\ref{cgTT2}) and Corollary \ref{TT2} as follows.

\begin{prop} (1) For any $\omega\in
A^{n,*}(M)$ and any $\phi_i\in A^{0,q}(M,\bigwedge^pT^{1,0}_{M})$,
$i=1,2$, there holds
$$[\phi_1,\phi_2]\lrcorner\omega=-\Delta_\omega(\phi_1\wedge\phi_2)+\phi_2\lrcorner\Delta_\omega\phi_1+\phi_1\lrcorner\Delta_\omega\phi_2.$$

 (2) For any $\omega\in
A^{*,*}(M)$ and any $\phi_i\in A^{0,1}({M},T^{1,0}_{{M}})$, $i=1,2$,
we have
 $$[\phi_1,\phi_2]\lrcorner\omega=-\big(\Delta_\omega(\phi_1\wedge\phi_2)-\phi_2\lrcorner\Delta_\omega\phi_1-\phi_1\lrcorner\Delta_\omega\phi_2+(\phi_1\wedge\phi_2)\lrcorner\partial\big).$$

 (3) For any $\eta\in A^{*,*}(V)$
and any $\phi_i\in A^{0,1}({M},T^{1,0}_{{M}})$, $i=1,2$, one has
$$[\phi_1,\phi_2]\lrcorner\eta=-\big(\diamond_\eta(\phi_1\wedge\phi_2)-\phi_2\lrcorner\diamond_\eta\phi_1-\phi_1\lrcorner\diamond_\eta\phi_2+(\phi_1\wedge\phi_2)\lrcorner\nabla\big).$$
\end{prop}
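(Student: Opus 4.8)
The plan is to treat parts (2) and (3) as direct transcriptions of identities already established, and to regard (1) as the only case requiring genuine work, it being the extension of Proposition \ref{PgTT} from Beltrami-type differentials to arbitrary elements of $\mathbf{t}$. First I would set up the notational dictionary: by the definitions $\Delta_\omega\phi=\partial(\phi\lrcorner\omega)$ and $\diamond_\eta\phi=\nabla(\phi\lrcorner\eta)$, together with the elementary contraction identity $(\phi_1\wedge\phi_2)\lrcorner\chi=\phi_2\lrcorner(\phi_1\lrcorner\chi)$ for any form $\chi$ — valid with the sign conventions under which the cup product on $\mathbf{t}$ and the maps $\phi\mapsto\phi\lrcorner\omega$, $\phi\mapsto\phi\lrcorner\eta$ are defined — one obtains $\Delta_\omega(\phi_1\wedge\phi_2)=\partial\big(\phi_2\lrcorner(\phi_1\lrcorner\omega)\big)$, $\diamond_\eta(\phi_1\wedge\phi_2)=\nabla\big(\phi_2\lrcorner(\phi_1\lrcorner\eta)\big)$, $(\phi_1\wedge\phi_2)\lrcorner\partial\omega=\phi_2\lrcorner(\phi_1\lrcorner\partial\omega)$ and $(\phi_1\wedge\phi_2)\lrcorner\nabla\eta=\phi_2\lrcorner(\phi_1\lrcorner\nabla\eta)$.

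With the dictionary in hand, part (2) is obtained by simply expanding its right-hand side, which becomes
$$\phi_1\lrcorner\partial(\phi_2\lrcorner\omega)-\partial\big(\phi_2\lrcorner(\phi_1\lrcorner\omega)\big)+\phi_2\lrcorner\partial(\phi_1\lrcorner\omega)-\phi_2\lrcorner(\phi_1\lrcorner\partial\omega),$$
i.e.\ precisely Identity (\ref{cgTT2}); and part (3) is obtained the same way, its expanded right-hand side being the right-hand side of (\ref{acTT}) in Corollary \ref{TT2}. So (2) and (3) require nothing beyond unwinding the definitions.

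For part (1) the point is that when $\omega\in A^{n,*}(M)$ one has $\partial\omega=0$ for bidegree reasons, so the would-be fourth term $(\phi_1\wedge\phi_2)\lrcorner\partial\omega$ drops out, and by the dictionary the assertion becomes
$$[\phi_1,\phi_2]\lrcorner\omega=\phi_1\lrcorner\partial(\phi_2\lrcorner\omega)-\partial\big(\phi_2\lrcorner(\phi_1\lrcorner\omega)\big)+\phi_2\lrcorner\partial(\phi_1\lrcorner\omega),$$
which is the conclusion of Proposition \ref{PgTT}, but now for $\phi_i\in A^{0,q_i}(M,\bigwedge^{p_i}T^{1,0}_M)$. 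I would prove this by reducing to the single-vector case. By $\mathbb{C}$-bilinearity (with the appropriate Koszul signs) and locality it suffices to take decomposable forms $\phi_1=\alpha\otimes(\partial_{i_1}\wedge\cdots\wedge\partial_{i_{p_1}})$ and $\phi_2=\beta\otimes(\partial_{j_1}\wedge\cdots\wedge\partial_{j_{p_2}})$ with $\alpha,\beta$ antiholomorphic forms and the $\partial$'s coordinate vector fields; then one repeats the contraction-and-reassembly argument of the proof of Proposition \ref{PgTT}, now contracting $\tau=\omega\wedge d\bar z^{I_1}\wedge d\bar z^{I_2}$ successively by $\partial_{i_1},\dots,\partial_{j_{p_2}}$ and invoking the iterated form of the Cartan commutator formula (\ref{St}). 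Equivalently, one may induct on $p_1+p_2$, the base case $p_1=p_2=1$ being exactly (\ref{TT}) and the inductive step reflecting that the $\mathbf{t}$-bracket is the Schouten--Nijenhuijs extension of the Lie bracket and that $\Delta_\omega$ is a second-order operator.

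No analytic input is needed beyond (\ref{St}), (\ref{cgTT2}) and Corollary \ref{TT2}: everything is a pointwise algebraic identity. The one delicate point, and the main obstacle, is the sign bookkeeping in part (1) — matching the Koszul signs produced by peeling off coordinate vector fields and reordering iterated contractions against the $(-1)^{pq}$-type signs built into the cup product on $\mathbf{t}$ and into the bracket of Definition \ref{br}. For that reason I would favour the inductive formulation, which confines the sign verification to a single Leibniz step.
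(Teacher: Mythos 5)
Your reading matches the paper's own treatment of this proposition, which is stated without a proof environment: parts (2) and (3) are nothing but Identity (\ref{cgTT2}) and Corollary \ref{TT2} rewritten through the dictionary $\Delta_\omega\phi=\partial(\phi\lrcorner\omega)$, $\diamond_\eta\phi=\nabla(\phi\lrcorner\eta)$, $(\phi_1\wedge\phi_2)\lrcorner\,\cdot\,=\phi_2\lrcorner(\phi_1\lrcorner\,\cdot\,)$, exactly as you say. Where you genuinely diverge is part (1), the only part with new content (the passage from Proposition \ref{PgTT}, i.e.\ $\phi_i\in A^{0,1}(M,T^{1,0}_M)$, to $A^{0,q}(M,\bigwedge^p T^{1,0}_M)$, the fourth term being absent since $\partial\omega=0$ for $\omega\in A^{n,*}(M)$). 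The paper does not prove this extension here; within the paper its justification comes from Section 4, where Corollary \ref{main} is obtained from the twisted commutator formula of Proposition \ref{Hodd}, itself proved by induction on polyvector degree via Claim \ref{Claim} in the generalized-complex setting. You instead stay inside ordinary complex geometry: reduction to decomposable polyvectors and iteration of the contraction argument based on (\ref{St}), or induction on $p_1+p_2$ with base case (\ref{TT}). That is a legitimate and more elementary, self-contained alternative; what the paper's route buys is that one induction, done in the $d_R$-twisted setting, yields the bundle-valued and generalized-complex statements simultaneously. Two cautions: do not let the inductive step rest on ``$\Delta_\omega$ is a second-order operator,'' since for contraction against a fixed $(n,*)$-form that BV-type property is essentially the identity being proved --- the honest step is the Leibniz computation peeling off one vector field, parallel to the induction in Proposition \ref{Hodd}; and the Koszul-sign bookkeeping against the $(-1)^{pq}$ conventions of Definition \ref{br} and the cup product on $\mathbf{t}$, which you defer, is where all the remaining work of (1) actually sits, so your text is a correct plan rather than a completed proof of that part.
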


Finally, for the reader's convenience we briefly recall how to
derive the original Tian-Todorov lemma from the above commutator
formulas.

\begin{lemm} Let $M$ be an $n$-dimensional complex manifold
with a non-vanishing holomorphic $n$-form $\omega_0$, which is given
in a local coordinate chart $(U; z^1,\cdots,z^n)$ by
$\omega_0\big|_U=dz^1\wedge\cdots\wedge dz^n$. Then:

a) \emph{(Lemma $3.1$ in \cite{[T]}, or also Section $2$ in
\cite{[F]})} For $\omega_i\in A^{n-1,1}(M)$, $i=1,2$,
\begin{equation}\label{Ti}
[\omega_1,\omega_2]=-\partial\big(\omega_2\lrcorner\imath^{-1}(\omega_1)\big)+\omega_1\wedge\sharp(\partial\omega_2)+\omega_2\wedge\sharp(\partial\omega_1),
\end{equation}
where $\imath:A^{0,q}(M,T^{1,0}_{M})\rightarrow A^{n-1,q}(M)$ is the
natural isomorphism by contraction with $\omega_0$ and $\sharp$
denotes the obvious map identifying the $(n,q)$-form
$\eta\wedge\omega_0$ with the $(0,q)$-form $\eta$ by $\omega_0$,
i.e., $\sharp(\eta\wedge\omega_0)=\eta$.

b) \emph{(Lemma $1.2.4$ in \cite{[T1]}, or also Lemma $64$ in
\cite{[T2]})} For $\phi_i\in A^{0,1}(M,T^{1,0}_{M})$, $i=1,2$, with
$\partial(\phi_i\lrcorner\omega_0)=0$,
 \begin{equation}\label{To}
[\phi_1,\phi_2]\lrcorner\omega_0=-\partial\big(\phi_2\lrcorner(\phi_1\lrcorner\omega_0)\big).
\end{equation}
\end{lemm}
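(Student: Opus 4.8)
The plan is to derive both identities (\ref{Ti}) and (\ref{To}) as immediate specializations of the commutator formulas already established, the only extra ingredient being the explicit form of the holomorphic volume form $\omega_0$ and the meaning of the maps $\imath$ and $\sharp$.

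For part b), I would apply Identity (\ref{TT}) of Proposition \ref{PgTT} with $V$ the trivial line bundle, $\omega=\omega_0\in A^{n,0}(M)$, and $\phi_i\in A^{0,1}(M,T^{1,0}_M)$. This gives
\begin{equation*}
[\phi_1,\phi_2]\lrcorner\omega_0=\phi_1\lrcorner\partial(\phi_2\lrcorner\omega_0)-\partial\big(\phi_2\lrcorner(\phi_1\lrcorner\omega_0)\big)+\phi_2\lrcorner\partial(\phi_1\lrcorner\omega_0).
\end{equation*}
Under the hypothesis $\partial(\phi_i\lrcorner\omega_0)=0$ for $i=1,2$, the first and third terms on the right vanish, leaving exactly (\ref{To}). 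I should note that $\phi_i\lrcorner\omega_0\in A^{n-1,1}(M)$, so $\partial(\phi_i\lrcorner\omega_0)\in A^{n,1}(M)$ makes sense, and that the computation is purely local so working in the chart $(U;z^1,\dots,z^n)$ with $\omega_0|_U=dz^1\wedge\cdots\wedge dz^n$ is legitimate.

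For part a), the point is to rewrite (\ref{To})-type data in terms of the Tian--Todorov bracket on $A^{n-1,*}(M)$ via the isomorphism $\imath$. Set $\phi_i=\imath^{-1}(\omega_i)\in A^{0,1}(M,T^{1,0}_M)$, so that $\omega_i=\phi_i\lrcorner\omega_0$. First I would check the compatibility $\imath([\phi_1,\phi_2])=[\omega_1,\omega_2]$, i.e.\ that the Schouten--Nijenhuis bracket of vector-valued forms transported by $\imath$ agrees with the bracket $[\cdot,\cdot]$ appearing on the left of (\ref{Ti}); this is a local coordinate check using Definition \ref{br} and $\omega_0|_U=dz^1\wedge\cdots\wedge dz^n$. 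Then I would apply the complete formula (\ref{cgTT2}) (equivalently (\ref{TT}) plus the extra $\bar\partial$-terms are irrelevant here since we only use $\partial$) with $\omega=\omega_0$, $\partial\omega_0=0$:
\begin{equation*}
[\phi_1,\phi_2]\lrcorner\omega_0=\phi_1\lrcorner\partial(\phi_2\lrcorner\omega_0)-\partial\big(\phi_2\lrcorner(\phi_1\lrcorner\omega_0)\big)+\phi_2\lrcorner\partial(\phi_1\lrcorner\omega_0).
\end{equation*}
Contracting both sides with $\imath^{-1}$ (using $\imath([\phi_1,\phi_2])=[\omega_1,\omega_2]$) and translating $\phi_i\lrcorner\omega_0=\omega_i$, $\phi_j\lrcorner\partial\omega_i$ into the $\sharp$ notation — the key identity being $\phi_1\lrcorner(\partial\omega_2)=\omega_1\wedge\sharp(\partial\omega_2)$ when one writes $\partial\omega_2=\eta_2\wedge\omega_0/\ $(local), so that $\sharp(\partial\omega_2)=\eta_2\in A^{0,1}(M)$ and $\phi_1\lrcorner(\eta_2\wedge\omega_0)=\eta_2\wedge(\phi_1\lrcorner\omega_0)=\sharp(\partial\omega_2)\wedge\omega_1$ up to the commuting sign — yields (\ref{Ti}). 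The $\imath^{-1}(\omega_1)$ inside the $\partial$-term in (\ref{Ti}) is just $\phi_1$, so $-\partial(\phi_2\lrcorner(\phi_1\lrcorner\omega_0))=-\partial(\omega_2\lrcorner\imath^{-1}(\omega_1))$ matches directly.

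The main obstacle I expect is bookkeeping rather than conceptual: getting the signs right in the identification $\phi_i\lrcorner(\partial\omega_j)\leftrightarrow \omega_i\wedge\sharp(\partial\omega_j)$ and in verifying $\imath([\phi_1,\phi_2])=[\omega_1,\omega_2]$, since contraction of an $(n,q)$-form by a $(0,1)$-vector-valued form and the definition of $[\cdot,\cdot]$ in Definition \ref{br} both carry degree-dependent signs $(-1)^{pq}$. I would handle this by doing the verification in the local frame $\{dz^1\wedge\cdots\wedge dz^n\}$ where every contraction is explicit, and by recording that $\sharp$ is defined precisely so that these signs cancel. Once those two local identities are in hand, (\ref{Ti}) and (\ref{To}) follow by direct substitution into (\ref{cgTT2}) and (\ref{TT}) respectively, with no further analysis needed.
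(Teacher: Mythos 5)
Your proposal is correct and follows essentially the same route as the paper: both a) and b) are obtained from identity (\ref{TT}) with $\omega=\omega_0$, with part a) amounting to the translation $\omega_i=\phi_i\lrcorner\omega_0$, $[\omega_1,\omega_2]=[\phi_1,\phi_2]\lrcorner\omega_0$, and $\omega_i\wedge\sharp(\partial\omega_j)=\phi_i\lrcorner\partial(\phi_j\lrcorner\omega_0)$ (the paper handles the sign bookkeeping you defer via its commutator rule (\ref{cr})), and part b) following from the vanishing of the two terms $\phi_i\lrcorner\partial(\phi_j\lrcorner\omega_0)$ under the hypothesis $\partial(\phi_i\lrcorner\omega_0)=0$.
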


Actually, both $(\ref{Ti})$ and $(\ref{To})$ can be achieved by
$(\ref{TT})$. In fact, for each $\omega_i$, we have some $\phi_i\in
A^{0,1}(M,T^{1,0}_{M})$ via $\omega_i=\omega_0\lrcorner\phi_i$. Then
$[\omega_1,\omega_2]=\omega_0\lrcorner[\phi_1,\phi_2]$. Here we need
a simple commutator rule, that is, for any $\omega\in A^{k,l}(M)$
and $\psi\in A^{0,q}(M,\wedge^pT^{1,0}_{M})$, one has
\begin{equation}\label{cr}
\omega\lrcorner\psi=(-1)^{q(k+l-p)}\psi\lrcorner\omega.
\end{equation}
So by the commutator rule (\ref{cr}), we have
$[\omega_1,\omega_2]=[\phi_1,\phi_2]\lrcorner\omega_0$ and
$$\omega_1\wedge\sharp(\partial\omega_2)
 =-\partial(\omega_0\lrcorner\phi_2)\lrcorner\phi_1
 =(-1)^{(n+1)+n-1}\phi_1\lrcorner\partial(\phi_2\lrcorner\omega_0)
 =\phi_1\lrcorner\partial(\phi_2\lrcorner\omega_0).
$$
Similarly,
$\omega_2\wedge\sharp(\partial\omega_1)=\phi_2\lrcorner\partial(\phi_1\lrcorner\omega_0)$.
It is easy to check that
$$-\partial\big(\omega_2\lrcorner\iota^{-1}(\omega_1)\big)=-\partial\big(\phi_1\lrcorner(\phi_2\lrcorner\omega_0)\big)=-\partial\big(\phi_2\lrcorner(\phi_1\lrcorner\omega_0)\big).$$
Therefore, we obtain an equivalent form of Tian's identity,
\begin{equation}\label{Ti1}
[\phi_1,\phi_2]\lrcorner\omega_0=-\partial\big(\phi_2\lrcorner(\phi_1\lrcorner\omega_0)\big)+\phi_1\lrcorner\partial(\phi_2\lrcorner\omega_0)+\phi_2\lrcorner\partial(\phi_1\lrcorner\omega_0),
\end{equation}
which is just the identity (\ref{TT}) with $\omega=\omega_0$.

As for Todorov's identity (\ref{To}), we just need notice that the
condition $\partial(\phi_i\lrcorner\omega_0)=0$ results in the
vanishing of the last two terms in the right-hand side of
(\ref{Ti1}).

By this crucial Tian-Todorov lemma, the well-know
$\partial\bar{\partial}$-lemma and Kuranishi's construction of power
series, Tian \cite{[T]} and Todorov \cite{[T1]} proved the famous
Bogomolov-Tian-Todorov unobstrution theorem. It can be stated rough
as follows. Let $M$ be a Calabi-Yau manifold, where $n=\dim M\geq
3$. Let $\pi:X\rightarrow S$, with central fiber $\pi^{-1}(0)=M$ be
the Kuranishi family of $M$, then the Kuranishi space $S$ is a
non-singular complex analytic space and $\dim S = \dim
H^1_{\mathds{C}}(M, \Theta_M) = \dim H^1_{\mathds{C}}(M,
\Omega^{n-1}),$ where $\Theta$ is the holomorphic tangent bundle of
$M$.

\vspace{1cm}
\section{Twisted commutator formula on generalized complex manifolds}
In this section, we prove a twisted commutator formula on
generalized complex manifolds, reprove Corollary \ref{TT2} for any
Hermitian holomorphic vector bundle and obtain a more general
commutator formula in Corollary \ref{main} as the applications of
our twisted commutator formula.

First of all, let us introduce some notations on generalized complex
geometry and we refer the readers to \cite{[G],[Li]} and the
references therein for a more detailed and systematic treatment of
generalized complex geometry. Here we just list some basic concepts
we need in this note.

Let $\check{M}$ be a smooth manifold, $T:=T_{\check{M}}$ the tangent
bundle of $\check{M}$ and $T^*:=T^*_{\check{M}}$ its cotangent
bundle. In the generalized complex geometry, for any $X,Y\in
C^\infty(T)$ and $\xi,\eta\in C^\infty(T^*)$, $T\oplus T^*$ is
endowed with a \emph{canonical} nondegenerate \emph{inner product}
given by
\begin{equation}\label{cinner}
\langle
X+\xi,Y+\eta\rangle=\frac{1}{2}\big(\iota_X(\eta)+\iota_Y(\xi)\big),
\end{equation}
 and there is an important canonical bracket on $T\oplus T^*$, so-called
\emph{Courant bracket}, which is defined by
\begin{equation}\label{Courant bracket}
[X+\xi,Y+\eta]=[X,Y]+L_X\eta-L_Y\xi-\frac{1}{2}d\big(\iota_X(\eta)-\iota_Y(\xi)\big).
\end{equation}
Here, $[\cdot,\cdot]$ on the right-hand side is the ordinary Lie
bracket of vector fields. Note that on vector fields the Courant
bracket reduces to the Lie bracket; in other words, if $pr_1:T\oplus
T^*\rightarrow T$ is the natural projection,
$$pr_1([A,B])=[pr_1(A),pr_1(B)]),$$
for any $A,B\in C^\infty(T\oplus T^*).$

A \emph{generalized almost complex structure} on $\check{M}$ is a
smooth section $J$ of the endomorphism bundle $\End(T\oplus T^*)$,
which satisfies both symplectic and complex conditions, i.e.
$J^*=-J$ (equivalently, orthogonal with respect to the canonical
inner product (\ref{cinner})) and $J^2=-1$. We can show that the
obstruction to the existence of a generalized almost complex
structure is the same as that for an almost complex structure. (See
Proposition 4.15 in \cite{[G]}.) Hence it is obvious that
(generalized) almost complex structures only exist on the
even-dimensional manifolds. Let $E\subset(T\oplus T^*)\otimes
\mathbb{C}$ be the $+i$-eigenbundle of the generalized almost
complex structure $J$. Then if $E$ is Courant involutive, i.e.
closed under the Courant bracket (\ref{Courant bracket}), we say
that $J$ is \emph{integrable} and also a \emph{generalized complex
structure}. Note that $E$ is a maximal isotropic subbundle of
$(T\oplus T^*)\otimes \mathbb{C}$.

As observed by P. \v{S}evera and A. Weinstein \cite{[SW]}, the
Courant bracket (\ref{Courant bracket}) on $T\oplus T^*$ can be
twisted by a real, closed $3$-form $H$ on $\check{M}$ in the
following way: given $H$ as above, define another important bracket
$[\cdot,\cdot]_H$ on $T\oplus T^*$ by
$$[X+\xi,Y+\eta]_H=[X+\xi,Y+\eta]+\iota_Y\iota_X (H),$$
which is called \emph{$H$-twisted Courant bracket}.
\begin{defi} \label{}
A generalized complex structure $J$ is said to be \emph{twisted
generalized complex} with respect to the closed $3$-form $H$ when
its $+i$-eigenbundle $E$ is involutive with respect to the
$H$-twisted Courant bracket and then the pair $(\check{M},J)$ is
called an \emph{$H$-twisted generalized complex manifold}.
\end{defi}

From now on, we consider the $H$-twisted generalized complex
manifold $(\check{M},J)$ defined as above. Postponing listing some
more notions in need, we must remark that they are not exactly the
same as the usual ones since we just define them for our
presentation below, and maybe miss their usual geometrical meaning.
The
 \emph{twisted de Rham differential} is given by
$$d_R=d+(-1)^k R\wedge\cdot,$$
where $R\in \Omega^k(\check{M},\mathbb{R})$. A natural action of
$T\oplus T^*$ on smooth differential forms is given by
$$(X+\xi)\cdot\alpha=\iota_X(\alpha)+\xi\wedge\alpha,\quad \text{for any
$X\in C^\infty(T),\ \xi\in C^\infty(T^*)$ and $\alpha\in
\Omega^*(\check{M},\mathbb{C})$}.$$
 Actually, this action can be
considered as 'lowest level' of a hierarchy of actions on the
bundles $T\bigoplus(\oplus_r\wedge^r T^*)$, $r=1,2,\cdots$, defined
by the similar formula
$$(X+\xi_1+\xi_2+\cdots)\cdot\alpha=\iota_X(\alpha)+\xi_1\wedge\alpha+\xi_2\wedge\alpha+\cdots,$$
for any $X\in C^\infty(T),\ \xi_1+\xi_2+\cdots\in
C^\infty(\oplus_r\wedge^r T^*)$ and $\alpha\in
\Omega^*(\check{M},\mathbb{C})$. Then in the following discussion we
adopt the action of $A=A_1\wedge\cdots\wedge A_k\in
C^\infty\Big(\bigwedge^k \big(T\bigoplus(\oplus_r\wedge^r
T^*)\big)\Big)$ on $\Omega^*(\check{M},\mathbb{C})$ given by
\begin{equation}\label{action}
A\cdot\alpha=(A_1\wedge\cdots\wedge A_k)\cdot\alpha\equiv A_1\cdot
A_2\cdot\cdots \cdot A_k\cdot\alpha,\qquad\text{for any $\alpha\in
\Omega^*(\check{M},\mathbb{C})$}.
\end{equation}
The \emph{generalized Schouten bracket} for $A=A_1\wedge\cdots\wedge
A_p\in C^\infty(\wedge^p (T\oplus T^*))$ and
$B=B_1\wedge\cdots\wedge B_q\in C^\infty(\wedge^q (T\oplus T^*))$ is
defined as
$$[A,B]_R=\sum_{i,j}(-1)^{i+j}[A_i,B_j]_R\wedge A_1\wedge\cdots\wedge \hat{A}_i\wedge
\cdots\wedge A_p\wedge
B_1\wedge\cdots\wedge\hat{B}_j\wedge\cdots\wedge B_q,$$ where
$\hat{}$ means 'omission', the \emph{$R$-twisted Courant bracket}
 $[A_i,B_j]_R$ is defined as $[A_i,B_j]+\iota_{Y_j}\iota_{X_i}(R)$ if
we take $A_i=X_i+\xi_i$ and $B_j=Y_j+\eta_j$, and the action of
$[A_i,B_j]_R$ comply with the principle of (\ref{action}). Here we
note that if $R$ is a $3$-form and $X+\xi$, $Y+\eta\in
C^\infty(T\oplus T^*)$, then the $R$-twisted Courant bracket
$[X+\xi,Y+\eta]_R$ still lie in $C^\infty(T\oplus T^*)$. However,
for $R$ being general, the bracket $[X+\xi,Y+\eta]_R$ doesn't lie in
$C^\infty(T\oplus T^*)$ in general since $\iota_Y\iota_X (R)$ is not
necessarily a $1$-form, but in $C^\infty(T\bigoplus(\oplus\wedge^*
T^*))$; hence this bracket still makes sense under the action
(\ref{action}).

\begin{prop}\label{Hodd}\emph{(See also Lemma $4.24$ of \cite{[G]}, ($17$) of
\cite{[KL]} and Lemma $2$ of \cite{[Li]}.)} For any smooth
differential form $\rho$, any smooth odd-degree form $R$ and any
$A\in C^\infty(\wedge^p E^*)$, $B\in C^\infty(\wedge^q E^*)$, we
have
\begin{equation}\label{Ht}
d_R(A\cdot B\cdot\rho)=(-1)^pA\cdot
d_R(B\cdot\rho)+(-1)^{(p-1)q}B\cdot
d_R(A\cdot\rho)+(-1)^{p-1}[A,B]_R\cdot\rho+(-1)^{p+q+1}A\cdot B\cdot
d_R\rho.
\end{equation}
\end{prop}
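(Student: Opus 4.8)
The plan is to reduce Proposition \ref{Hodd} to the analogue of the general commutator formula (\ref{cf}) adapted to the twisted setting, by treating the action of $E^*$-sections on forms as the appropriate ``contraction'' and the twisted de Rham operator $d_R$ as the appropriate ``$\nabla$''. The central observation is that $d_R$ is an odd derivation of the wedge product and that the Clifford-type action $(X+\xi)\cdot\alpha=\iota_X\alpha+\xi\wedge\alpha$ satisfies a graded Leibniz rule with respect to $d_R$: one computes directly that for $A_i=X_i+\xi_i\in C^\infty(T\oplus T^*)$ (or more generally in $C^\infty(T\bigoplus(\oplus_r\wedge^r T^*))$) and $\alpha\in\Omega^*(\check M,\mathbb C)$, the ``Cartan-type'' identity
\begin{equation*}
d_R(A_1\cdot\alpha)=(-1)^{|A_1|}A_1\cdot d_R\alpha+(d_R A_1)\cdot\alpha
\end{equation*}
holds, where $d_R A_1$ denotes $L^R$-type derivative of the section. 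I would establish this base case first — it is the twisted analogue of Cartan's formula (\ref{C}) — and this is where the closedness/odd-degree hypothesis on $R$ enters, since $d_R^2=\pm R\wedge\cdot$ terms must interact correctly with $\iota_X(R)$ appearing in the twisted bracket.

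Next I would promote this to arbitrary $A\in C^\infty(\wedge^p E^*)$, $B\in C^\infty(\wedge^q E^*)$ by induction on $p+q$, exactly mirroring the two-line proof of the Lemma in Section 2. Concretely, write $A=A_1\wedge A'$ with $A'\in C^\infty(\wedge^{p-1}E^*)$ and apply the base-case Cartan formula to $A_1\cdot(A'\cdot B\cdot\rho)$; expand $d_R(A'\cdot B\cdot\rho)$ by the inductive hypothesis; then on the other side expand $d_R(A_1\cdot(A'\cdot B\cdot\rho))$ using that $A_1$ acts as a (graded) derivation-like operator, producing the term $(d_R A_1)\cdot A'\cdot B\cdot\rho$. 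The Courant/Schouten bracket terms $[A,B]_R$ are then assembled from the ``cross terms'' $(d_R A_1)\cdot A'\cdot\cdots$ precisely as in the defining formula for $[A,B]_R$ — this is the combinatorial heart of the argument. The signs $(-1)^p$, $(-1)^{(p-1)q}$, $(-1)^{p-1}$, $(-1)^{p+q+1}$ should fall out from bookkeeping of how many times $A_1$ is commuted past the degree-$1$-ish factors in the Clifford action; since all four coefficients in (\ref{Ht}) are of this sign type, the induction is self-consistent.

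The main obstacle, as usual with these formulas, is sign bookkeeping: the action (\ref{action}) is a \emph{Clifford}-type action, not a strictly graded-commutative one, so moving a factor $A_i$ past $B_j$ or past $\alpha$ picks up signs that depend both on Clifford parity and on form degree, and one must be scrupulous about which convention for $\wedge$ on $E^*$-sections versus the action order $A_1\cdot A_2\cdots$ is being used. I would handle this by fixing once and for all the convention $A\cdot\alpha = A_1\cdot(A_2\cdot(\cdots(A_k\cdot\alpha)))$ as in (\ref{action}), proving the $p=q=1$ case with all signs explicit, and then letting the induction propagate them; a secondary technical point is that $[A_i,B_j]_R$ need not lie in $C^\infty(T\oplus T^*)$ when $R$ is not a $3$-form, so the identity must be read throughout in $C^\infty(T\bigoplus(\oplus_r\wedge^r T^*))$ acting via the extended action, which is already anticipated in the paragraph preceding the statement. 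Everything else — the derivation property of $d_R$, the vanishing of the ``self-interaction'' terms like $d_R(A_1\cdot A_1\cdot\rho)$ type expressions, and the reassembly of the bracket — is routine once the base case is in hand.
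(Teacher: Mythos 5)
There is a genuine gap at the heart of your induction: the ``Cartan-type'' base identity
$d_R(A_1\cdot\alpha)=(-1)^{|A_1|}A_1\cdot d_R\alpha+(d_RA_1)\cdot\alpha$ is not valid if $(d_RA_1)\cdot$ is to mean the (extended) Clifford action of some section, and your later step of assembling $[A,B]_R$ from ``cross terms $(d_RA_1)\cdot A'\cdot B\cdot\rho$'' depends on exactly that reading. A direct computation with $A_1=X+\xi$ and $R$ odd gives
\begin{equation*}
d_R(A_1\cdot\alpha)+A_1\cdot d_R\alpha=\big(L_X+d\xi\wedge\cdot-\iota_X(R)\wedge\cdot\big)\alpha,
\end{equation*}
and the Lie-derivative summand $L_X$ is a first-order differential operator, not multiplication or contraction by any fixed element; so ``$d_RA_1$'' does not exist as a section and the cross terms you want to collect never appear in that form. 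What actually produces the bracket in the inductive step is the commutator statement (the paper's Claim): for $A_0=X+\xi$ and $C\in C^\infty(E^*)$,
\begin{equation*}
\big[\,L_X+d\xi\wedge\cdot-\iota_X(R)\wedge\cdot\,,\,C\cdot\,\big]\alpha=[A_0,C]_R\cdot\alpha,
\end{equation*}
whose proof uses $[L_X,\iota_Y]=\iota_{[X,Y]}$, the Cartan formula, and crucially the isotropy relation $\iota_X(\eta)+\iota_Y(\xi)=0$. Your proposal states neither this commutator identity nor any substitute for it, so the inductive step as described does not go through.

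Two related points. First, nowhere do you use that $A,B$ are sections of $\wedge^\bullet E^*$ with $E$ isotropic; without this, even the untwisted $p=q=1$ case (Gualtieri's identity) carries an extra term $-d\langle A,B\rangle\wedge\rho$, and (\ref{Ht}) is false for general sections of $T\oplus T^*$ -- the isotropy is exactly what lets the paper drop that term and what makes the Claim true. Second, your appeal to closedness of $R$ and to ``$d_R^2=\pm R\wedge\cdot$'' is a red herring: the proposition assumes only that $R$ has odd degree, and $d_R^2$ never enters; the oddness is used in sign bookkeeping, e.g.\ in the explicit computation of the twist correction $(A\cdot B)\cdot(R\wedge\rho)=R\wedge(B\cdot A\cdot\rho)+B\cdot R\wedge(A\cdot\rho)-A\cdot R\wedge(B\cdot\rho)-\iota_Y\iota_X(R)\wedge\rho$, which is how the paper reduces the twisted $p=q=1$ case to the untwisted one before running the induction.
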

\begin{proof}
Firstly, we consider the initial case, i.e., $A,B\in C^\infty(E^*)$.
It is proved by Gualtieri in Lemma $4.24$ of \cite{[G]} that
\begin{equation}\label{initial}
A\cdot B\cdot d\rho=d(B\cdot A\cdot\rho)+B\cdot d(A\cdot\rho)-A\cdot
d(B\cdot\rho)+[A,B]\cdot\rho-d\langle A,B\rangle\wedge \rho,
\end{equation}
where $A,B\in C^\infty(T\oplus T^*)$. Actually, (\ref{initial}) is
essentially due to the commutator formula (\ref{Ci}) and classical
Cartan formula $L_X=d\circ\iota_X+\iota_X\circ d$. In our case, we
can drop the last term involving the inner product.

Later, Kapustin and Li proved the $H$-twisted version in ($17$) of
\cite{[KL]} and then Li generalized it to any $A\in
C^\infty(\wedge^p E^*)$ and $B\in C^\infty(\wedge^q E^*)$ in Lemma
$2$ of \cite{[Li]}, where $H$ is a real closed $3$-form\footnote{In
\cite{[Li]}, Yi Li proved an analog of the Bogomolov-Tian-Todorov
theorem for $H$-twisted generalized Calabi-Yau manifolds by his
critical Lemma $2$, that is, the unobtruction and smoothness of the
moduli space of generalized complex structures on a compact
$H$-twisted generalized Calabi-Yau manifold.}. Here we give a
slightly more general version when $R$ is any smooth form of odd
degree. For the reader's convenience, we will write down the details
as follows though the essential idea of this process is due to
\cite{[Li]}.

Now let us compute $(A\cdot B)\cdot (R\wedge\rho)$. Let $A=X+\xi$,
$B=Y+\eta$ and $R\in \Omega^k(\check{M},\mathbb{R})$ with odd $k$.
By a direct computation and the notations introduced above, we have
the following two equalities
\begin{equation}\label{BHA}
\begin{aligned}
 &B\cdot R\wedge(A\cdot\rho)\\
 =&(\iota_Y+\eta\wedge)(R\wedge\iota_X(\rho)+R\wedge\xi\wedge\rho)\\
 =&\iota_Y\big(R\wedge\iota_X(\rho)\big)+\iota_Y(R\wedge\xi\wedge\rho)+\eta\wedge R\wedge\iota_X(\rho)+\eta\wedge R\wedge\xi\wedge\rho\\
 =&\quad\iota_Y(R)\wedge\iota_X(\rho)+(-1)^k R\wedge\iota_Y\iota_X(\rho)+\iota_Y(R)\wedge\xi\wedge\rho+(-1)^kR\wedge\iota_Y(\xi)\wedge\rho\\
  &+(-1)^{k-1}R\wedge\xi\wedge\iota_Y(\rho)+\eta\wedge R\wedge\iota_X(\rho)+\eta\wedge
  R\wedge\xi\wedge\rho
\end{aligned}
\end{equation}
and
\begin{equation}\label{BA}
B\cdot
A\cdot\rho=\iota_Y\iota_X(\rho)+\iota_Y(\xi)\wedge\rho-\xi\wedge\iota_Y(\rho)+\eta\wedge\iota_X(\rho)+\eta\wedge\xi\wedge\rho.
\end{equation}
Hence, we have
\begin{equation}\label{initial2}
\begin{aligned}
   &(A\cdot B)\cdot (R\wedge\rho)\\
 =&(\iota_X+\xi\wedge)\big(\iota_Y(R\wedge\rho)+\eta\wedge R\wedge\rho\big)\\
 =&\iota_X\iota_Y(R\wedge\rho)+\iota_X(\eta\wedge R\wedge\rho)+\xi\wedge\iota_Y(R\wedge\rho)+\xi\wedge\eta\wedge R\wedge\rho\\
 =&\quad \iota_X\iota_Y(R)\wedge\rho+(-1)^{k-1}\iota_Y(R)\wedge\iota_X(\rho)+(-1)^{k}\iota_X(R)\wedge\iota_Y(\rho)+R\wedge\iota_X\iota_Y(\rho)\\
  &+\iota_X(\eta)\wedge R\wedge\rho-\eta\wedge \iota_X(R)\wedge\rho+(-1)^{k+1}\eta\wedge R\wedge\iota_X(\rho)\\
  &+\xi\wedge\iota_Y(R)\wedge\rho+(-1)^{k}\xi\wedge
  R\wedge\iota_Y(\rho)+R\wedge\xi\wedge\eta\wedge\rho\\
 =&R\wedge (B\cdot A\cdot\rho)+B\cdot R\wedge (A\cdot\rho)-A\cdot R\wedge
 (B\cdot\rho)-\iota_Y\iota_X(R)\wedge\rho,
\end{aligned}
\end{equation}
where the last equality applies the equalities (\ref{BHA}) and
(\ref{BA}). So by combining (\ref{initial}) with the last term
dropped and (\ref{initial2}) with minus sign, we complete the proof
of the initial case of (\ref{Ht}).

Then, by induction on the degrees of $A$ and $B$, we can conclude
the proof. Actually, we just need to assume that (\ref{Ht}) holds
for all $p\leq r$ and $q=s$ and then show that it holds for $p= r+1$
and $q=s$ since (\ref{Ht}) is graded symmetric in $A$ and $B$. Here
we set $A=A_0\wedge \tilde{A}$ with $\tilde{A}=A_1\wedge\cdots\wedge
A_r$ and $B=B_1\wedge\cdots\wedge B_s$, where all $A_i,B_i\in
C^\infty(E^*)$. Assume that $A_0=X+\xi$, where $X\in C^\infty(T)$
and $\xi\in C^\infty(T^*)$. Then, one has
\begin{equation}\label{dH}
\begin{aligned}
   &d_R(A\cdot B\cdot\rho)\\
 =&(d-R\wedge)(\iota_X+\xi\wedge)(\tilde{A}\cdot B\cdot\rho)\\
 =&\big(L_X+d\xi\wedge\cdot-\iota_X(R)\wedge\cdot\big)(\tilde{A}\cdot B\cdot\rho)-A_0\cdot d_R(\tilde{A}\cdot B\cdot\rho)\\
 =&\big(L_X+d\xi\wedge\cdot-\iota_X(R)\wedge\cdot\big)(\tilde{A}\cdot B\cdot\rho)-A_0\cdot\big((-1)^r\tilde{A}\cdot d_R(B\cdot\rho)\\
 &+(-1)^{(r-1)s}B\cdot d_R(\tilde{A}\cdot\rho)+(-1)^{r-1}[\tilde{A},B]_R\cdot\rho+(-1)^{r+s+1}\tilde{A}\cdot B\cdot d_R\rho\big)\\
 =&(-1)^{(r+1)}A\cdot d_R(B\cdot\rho)+(-1)^{rs}B\cdot d_R({A}\cdot\rho)+(-1)^{r+s}A\cdot B\cdot d_R\rho+(-1)^{r}(A_0\wedge[\tilde{A},B]_R)\cdot\rho\\
 &+\big(L_X+d\xi\wedge\cdot-\iota_X(R)\wedge\cdot\big)(\tilde{A}\cdot B\cdot\rho)-(-1)^{rs}B\cdot\big(L_X+d\xi\wedge\cdot-\iota_X(R)\wedge\cdot\big)(\tilde{A}\cdot\rho).
\end{aligned}
\end{equation}

Next, we need the following

\begin{Claim} \label{Claim}For any $C\in C^\infty(E^*)$ and $\alpha\in
\Omega^*(\check{M},\mathbb{C})$, we have
$$[L_X+d\xi\wedge\cdot-\iota_X(R)\wedge\cdot,C\cdot]\alpha=[A_0,C]_R\cdot\alpha,$$
where the bracket $[\cdot,\cdot]$ on the left-hand side of the
equality is just the usual Lie bracket.
\end{Claim}
Before the proof, we can easily see from this claim that the last
two terms on the right-hand side of (\ref{dH}) combine to give us
$$(-1)^{rs}([A_0,B]_R\wedge\tilde{A})\cdot\rho$$
and then the last three terms  on the right-hand side of (\ref{dH})
combine to give
$$(-1)^{r}[A,B]_R\cdot\rho.$$
Hence, one has
$$ d_R(A\cdot B\cdot\rho)=(-1)^{(r+1)}A\cdot d_R(B\cdot\rho)+(-1)^{rs}B\cdot d_R({A}\cdot\rho)+(-1)^{r+s}A\cdot B\cdot d_R\rho
+(-1)^{r}[A,B]_R\cdot\rho,$$ by which we complete the induction.

Finally, we prove Claim \ref{Claim} to conclude the proof. If we
write $C=Y+\eta$, then
\begin{align*}
   &[L_X+d\xi\wedge\cdot-\iota_X(R)\wedge\cdot,C\cdot]\alpha\\
 =&L_X\iota_Y(\alpha)-\iota_Y(L_X\alpha)+(L_X\eta)\wedge\alpha+\iota_Y\iota_X(R)\wedge\alpha-\iota_Y(d\xi)\wedge\alpha\\
 =&\iota_{[X,Y]}(\alpha)+(L_X\eta)\wedge\alpha+\iota_Y\iota_X(R)\wedge\alpha-\iota_Y(d\xi)\wedge\alpha\\
 =&\iota_{[X,Y]}(\alpha)+(L_X\eta)\wedge\alpha-(L_Y\xi)\wedge\alpha-\frac{1}{2}d\big(\iota_X(\eta)-\iota_Y(\xi)\big)\wedge\alpha+\iota_Y\iota_X(R)\wedge\alpha\\
 =&[X+\xi,Y+\eta]\cdot\alpha+\iota_Y\iota_X(R)\wedge\alpha\\
 =&[A_0,C]_R\cdot\alpha,
\end{align*}
where the second equality uses the commutator formula (\ref{Ci}):
$L_X\circ\iota_Y-\iota_Y\circ L_X=\iota_{[X,Y]},$ and the third
equality uses the fact that $\iota_X(\eta)+\iota_Y(\xi)=0$ and the
classical Cartan formula $L_X=d\circ\iota_X+\iota_X\circ d$.
\end{proof}

As a direct corollary of Proposition \ref{Hodd}, one has

\begin{coro}\label{H1AB2}
For any smooth differential form $\rho$, any smooth $1$-form $R$ and
any $A,B\in C^\infty(\wedge^2 E^*)$, we have
\begin{equation}\label{iH1AB2}
d_R(A\cdot B\cdot\rho)=A\cdot d_R(B\cdot\rho)+B\cdot
d_R(A\cdot\rho)-[A,B]\cdot\rho-A\cdot B\cdot d_R\rho.
\end{equation}
\end{coro}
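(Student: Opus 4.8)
The plan is to obtain (\ref{iH1AB2}) as a direct specialization of Proposition \ref{Hodd}. First I would note that a smooth $1$-form is of odd degree, so (\ref{Ht}) applies verbatim with $R\in\Omega^1(\check{M},\mathbb{R})$. Taking $A,B\in C^\infty(\wedge^2 E^*)$, i.e.\ $p=q=2$, I would then evaluate the four sign factors appearing in (\ref{Ht}): $(-1)^p=1$, $(-1)^{(p-1)q}=(-1)^2=1$, $(-1)^{p-1}=-1$, and $(-1)^{p+q+1}=(-1)^5=-1$. This already gives
$$d_R(A\cdot B\cdot\rho)=A\cdot d_R(B\cdot\rho)+B\cdot d_R(A\cdot\rho)-[A,B]_R\cdot\rho-A\cdot B\cdot d_R\rho,$$
so the only remaining point is to replace the $R$-twisted generalized Schouten bracket $[A,B]_R$ by the ordinary one $[A,B]$.

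To justify this, I would return to the definition of $[A,B]_R$: writing $A=A_1\wedge A_2$ and $B=B_1\wedge B_2$ with $A_i=X_i+\xi_i$ and $B_j=Y_j+\eta_j$ sections of $E^*\subset(T\oplus T^*)\otimes\mathbb{C}$, it is assembled out of the twisted Courant brackets $[A_i,B_j]_R=[A_i,B_j]+\iota_{Y_j}\iota_{X_i}(R)$. Since $R$ has degree $1$, the form $\iota_{X_i}(R)$ is a function, and hence $\iota_{Y_j}\iota_{X_i}(R)=\iota_{Y_j}\big(\iota_{X_i}(R)\big)=0$. Therefore $[A_i,B_j]_R=[A_i,B_j]$ for every $i,j$, and consequently $[A,B]_R=[A,B]$. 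Substituting this into the displayed identity yields exactly (\ref{iH1AB2}).

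Since every step here is routine bookkeeping, I do not anticipate any genuine obstacle; the one point deserving a moment's care is precisely the vanishing of the twisting term $\iota_{Y_j}\iota_{X_i}(R)$ for a $1$-form $R$ --- this is what collapses the twisted Schouten bracket to the plain Schouten bracket and makes the bracket term in the stated formula independent of $R$.
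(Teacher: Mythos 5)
Your proposal is correct and follows exactly the paper's route: Corollary \ref{H1AB2} is obtained by specializing Proposition \ref{Hodd} to $p=q=2$ with $R$ a $1$-form, the signs $(-1)^p=(-1)^{(p-1)q}=1$ and $(-1)^{p-1}=(-1)^{p+q+1}=-1$ giving the stated identity. Your extra observation that $\iota_{Y_j}\iota_{X_i}(R)=0$ for degree-one $R$, so that $[A,B]_R=[A,B]$, is a point the paper leaves implicit and is a worthwhile clarification of why the bracket in \eqref{iH1AB2} is the untwisted one.
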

Obviously, similar to (\ref{gTT}) vs (\ref{TT}), we can obtain an
equivalent form of (\ref{iH1AB2}).

Then, based on the discussions above, we can reprove Corollary
\ref{TT2} for any Hermitian holomorphic vector bundle on a complex
manifold. Here we follow the notations in the previous section.

\begin{coro} Let $V$ be an arbitrary Hermitian holomorphic vector bundle on the complex manifold $M$.
For any $\omega\in A^{*,*}(V)$ and any $\phi_i\in
A^{0,1}(M,T^{1,0}_{M})$, $i=1,2$, there holds
$$[\phi_1,\phi_2]\lrcorner\omega=\phi_1\lrcorner\nabla(\phi_2\lrcorner\omega)-\nabla\big(\phi_2\lrcorner(\phi_1\lrcorner\omega)\big)
+\phi_2\lrcorner\nabla(\phi_1\lrcorner\omega)-\phi_2\lrcorner(\phi_1\lrcorner\nabla\omega),$$
where $\nabla$ is the Chern connection of the Hermitian holomorphic
vector bundle $V$.
\end{coro}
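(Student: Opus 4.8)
The plan is to realize the complex-geometric setting as a special case of the twisted framework of Proposition~\ref{Hodd} and Corollary~\ref{H1AB2}. Concretely, for a Hermitian holomorphic vector bundle $V$ on the complex manifold $M$, we work on the underlying smooth manifold and interpret the Dolbeault-type operators as twisted de Rham operators on $V$-valued forms. Recall that the Chern connection splits as $\nabla=\nabla^{1,0}+\bar\partial$, and on a $V$-valued form one has the analogue of (\ref{cd}): locally, with a holomorphic frame $s=\{s_i\}$ and connection matrix $\theta=(\theta_{ij})$ (of type $(1,0)$), $\nabla(\omega_i\otimes s_i)=d\omega_i\otimes s_i+(-1)^{\deg\omega_i}\omega_i\wedge\theta_{ij}\otimes s_j$. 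The point is that, after tensoring with the frame, the extra $\theta$-terms play exactly the role of a twisting $1$-form $R$ in $d_R=d+(-1)^k R\wedge\cdot$, so that the identity (\ref{iH1AB2}) of Corollary~\ref{H1AB2}, applied componentwise, produces precisely the $\nabla$-version of the commutator formula.

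The key steps, in order, are as follows. First, I would fix a holomorphic frame $s=\{s_1,\dots,s_r\}$ of $V$ and the metric matrix $h=(h_{i\bar j})$, and write $\omega=\sum_i\omega_i\otimes s_i$ and $\nabla s_i=\sum_j\theta_{ij}\otimes s_j$ with $\theta_{ij}=\sum_k\partial h_{i\bar k}\cdot h^{\bar k j}$, exactly as in (\ref{assum}). Second, I would interpret each $\phi_i\in A^{0,1}(M,T^{1,0}_M)$ as an element of $C^\infty(\wedge^2 E^*)$ in the model where the relevant Lie-algebroid is built from $T^{1,0}_M$ and $\wedge^\bullet(T^{0,1}_M)^*$, so that the Courant-type bracket $[\phi_1,\phi_2]$ reduces to the bracket of Definition~\ref{br} and the action $\phi_i\cdot(-)$ reduces to the contraction $\phi_i\lrcorner(-)$; in this identification $d_R$ with the appropriate choice of $R$ coincides with $\partial$ on scalar forms and with $\nabla^{1,0}$-type operator on $V$-valued forms. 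Third, I would apply Corollary~\ref{H1AB2} with $\rho=\omega_i$, $A=\phi_1$, $B=\phi_2$, $R$ absorbing the connection $1$-form $\theta$, to get the identity for each component, and then tensor with $s_i$, sum over $i$, and reassemble via (\ref{cd}); the $\theta$-contributions organize into the $\nabla$-terms by the same bookkeeping already carried out in the proof of Theorem~\ref{TTh} (identities (\ref{con}), (\ref{bar}), (\ref{cgTT}), (\ref{dTT})). Fourth, I would note that the $\bar\partial$-direction contributes nothing new, by the vanishing identity (\ref{bar}), so only the $\partial=d_R$ piece survives and yields the stated formula for arbitrary bidegree.

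The main obstacle I anticipate is the bookkeeping of signs and of the twisting form: one must check that the connection matrix $\theta$ (a $(1,0)$-form with values in $\End V$, not a scalar $1$-form) can legitimately be fed into the scalar twisted identity (\ref{iH1AB2}) componentwise without cross-terms being lost. This is handled exactly as in the proof of Theorem~\ref{TTh}: one first establishes the scalar identity (\ref{TTf}) for $g\cdot\partial f$ in place of a generic form, then substitutes $g\cdot\partial f=\partial h_{i\bar k}\cdot h^{\bar k j}$ and sums over $k$ and $j$ to recover the $\nabla s_i$-terms in (\ref{con}); the only thing to verify is that the twisted formula of Corollary~\ref{H1AB2} reproduces (\ref{TT})/(\ref{cgTT2}) on scalars, which is immediate since for $R$ of degree $1$ and the complex identification, $d_R$ restricted to scalar forms is just $\partial$ (the $(-1)^kR\wedge\cdot$ term being the bundle correction that is absent on scalars). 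Once this compatibility is pinned down, the remaining computation is the same as in Theorem~\ref{TTh} and Corollary~\ref{TT2}, and, as the authors say, the details can safely be left to the reader.
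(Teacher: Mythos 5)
Your proposal is essentially the paper's own proof: the authors likewise obtain the statement as a direct application of Corollary \ref{H1AB2}, taking $A=(\phi_1)^i\cdot\partial_i$, $B=(\phi_2)^j\cdot\partial_j$ in $C^\infty(\wedge^2E^*)$ and $R$ equal to the Chern connection $(1,0)$-form $\theta$ (with a minus sign), working componentwise in a holomorphic frame and reassembling via (\ref{cd}), exactly as you outline. The only inaccuracy is your side remark that $d_R$ ``coincides with $\partial$ on scalar forms'' and gives a $\nabla^{1,0}$-type operator: in fact $d_R=d-R\wedge\cdot$ contains the full de Rham differential, so after tensoring with the frame it already reproduces the whole Chern connection $\nabla=\nabla^{1,0}+\bar{\partial}$, making your extra appeal to (\ref{bar}) unnecessary; this does not affect the validity of the argument.
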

\begin{proof}
This corollary is a direct application of Corollary \ref{H1AB2} when
we set $A=(\phi_1)^i\cdot\partial_i$ and
$B=(\phi_2)^j\cdot\partial_j$ and take $R$ as the connection
$(1,0)$-form matrix $\theta$ of the connection $\nabla$ with respect
to a holomorphic frame $s$ of $V$ with minus sign, by the same
principle as we choose $g\cdot\partial f$ in the proof of Theorem
\ref{TTh}. It is obvious that $E$ in Corollary \ref{H1AB2} is taken
as $T^{0,1}\otimes {T^*}^{1,0}$ in our case. More precisely, since
\begin{align*}
   [A,B]
 =&[(\phi_1)^i\cdot\partial_i,(\phi_2)^j\cdot\partial_j]\\
 =&-[(\phi_1)^i,\partial_j]\wedge\partial_i\wedge(\phi_2)^j-[\partial_i,(\phi_2)^j]\wedge(\phi_1)^i\wedge\partial_j\\
 =&\iota_{\partial_j}\big(d(\phi_1)^i\big)\wedge\partial_i\wedge(\phi_2)^j-\iota_{\partial_i}\big(d(\phi_1)^j\big)\wedge(\phi_1)^i\wedge\partial_j\\
 =&(\phi_2)^j\wedge\partial_j(\phi_1)^i\wedge\partial_i+(\phi_1)^i\wedge\partial_j(\phi_2)^j\wedge\partial_j,
\end{align*}
then one has $$[A,B]\cdot\omega=[\phi_1,\phi_2]\lrcorner\omega.$$
Moreover, one easily knows that
$$A\cdot d_R(B\cdot\omega)=\phi_1\lrcorner\nabla(\phi_2\lrcorner\omega),$$
$$B\cdot d_R(A\cdot\omega)=\phi_2\lrcorner\nabla(\phi_1\lrcorner\omega),$$
$$d_R(A\cdot B\cdot\omega)=\nabla(\phi_2\lrcorner(\phi_1\lrcorner\omega))$$
and
$$A\cdot B\cdot d_R\omega=\phi_2\lrcorner(\phi_1\lrcorner\nabla\omega).$$

Hence, by substituting the five equalities above into
$(\ref{iH1AB2})\otimes s$ the equivalent form of (\ref{iH1AB2}), we
complete our proof.
\end{proof}

Almost by the same argument as the previous corollary, we can
generalize it to any polyvector fields as follows.
\begin{coro}\label{main} Let $V$ be an arbitrary Hermitian holomorphic vector bundle on the complex manifold $M$.
For any $\omega\in A^{*,*}(V)$ and any $\phi_i\in
A^{0,q_i}(M,\bigwedge^{p_i}T^{1,0}_{M})$, $i=1,2$, there holds
$$[\phi_1,\phi_2]\lrcorner\omega=\phi_1\lrcorner\nabla(\phi_2\lrcorner\omega)-\nabla\big(\phi_2\lrcorner(\phi_1\lrcorner\omega)\big)
+\phi_2\lrcorner\nabla(\phi_1\lrcorner\omega)-\phi_2\lrcorner(\phi_1\lrcorner\nabla\omega),$$
where $\nabla$ is the Chern connection of the Hermitian holomorphic
vector bundle $V$ and $[\cdot,\cdot]$ on the LHS is the standard
Schouten-Nijenhuys bracket on polyvector fields. For convention,
here we assume that the first bidegree of $\omega$ is not less than
any $p_i$.
\end{coro}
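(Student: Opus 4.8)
The plan is to deduce Corollary \ref{main} from the general twisted commutator formula, Proposition \ref{Hodd}, by the very reduction already used for the previous corollary, the only new feature being that the ``Clifford factors'' attached to $\phi_1$ and $\phi_2$ are now tuples of length $p_i+q_i$ rather than of length $2$. Since everything is local, I would fix a coordinate chart on which $V$ carries a holomorphic frame $s=\{s_1,\dots,s_r\}$ with connection $(1,0)$-form matrix $\theta=(\theta_{ij})$, and apply Proposition \ref{Hodd} with $E:=T^{0,1}\oplus (T^*)^{1,0}$, so that $E^*\cong T^{1,0}\oplus (T^*)^{0,1}$, and with $R:=-\theta$. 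Exactly as in the proof of Theorem \ref{TTh}, writing $\theta_{ij}=\sum_k\partial h_{i\bar k}\cdot h^{\bar k j}$ and treating each $\partial h_{i\bar k}\cdot h^{\bar k j}$ as a ``$g\cdot\partial f$'' makes it possible, after tensoring with $s$ and summing over the frame, to pass from the scalar twisted de Rham operator $d_R=\partial-\theta\wedge\cdot$ to the Chern connection $\nabla$ on $V$-valued forms; and since $R$ has degree one, the twist disappears from the bracket, i.e.\ $[\,\cdot\,,\cdot\,]_R=[\,\cdot\,,\cdot\,]$.

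Next I would set up the dictionary between polyvector-valued forms and sections of $\bigwedge^\bullet E^*$. A form $\phi_i\in A^{0,q_i}(M,\bigwedge^{p_i}T^{1,0}_M)$ is a section of the summand $\bigwedge^{q_i}(T^*)^{0,1}\otimes\bigwedge^{p_i}T^{1,0}\subset\bigwedge^{p_i+q_i}E^*$; writing it locally as a sum of decomposable pieces $f\,d\bar z^{k_1}\wedge\cdots\wedge d\bar z^{k_{q_i}}\wedge\partial_{j_1}\wedge\cdots\wedge\partial_{j_{p_i}}$ produces a section $A_{\phi_i}\in C^\infty(\bigwedge^{p_i+q_i}E^*)$ with $A_{\phi_i}\cdot\omega=\phi_i\lrcorner\omega$ for every $\omega\in A^{*,*}(V)$ whose first bidegree is at least $p_i$ (this is the only role of the bidegree hypothesis in the statement), and likewise $A_{\phi_1}\cdot A_{\phi_2}\cdot\omega=\phi_2\lrcorner(\phi_1\lrcorner\omega)$, exactly as in the previous corollary; the ``$g\cdot\partial f$'' device then yields $d_R(A_{\phi_i}\cdot\omega)\otimes s=\nabla(\phi_i\lrcorner\omega)$ after the frame-summation. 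The decisive identification is
\[
[A_{\phi_1},A_{\phi_2}]\cdot\omega=[\phi_1,\phi_2]\lrcorner\omega,
\]
where $[\phi_1,\phi_2]$ is the standard Schouten-Nijenhuys bracket: expanding the generalized Schouten bracket as its defining sum over pairs of factors, the Courant brackets of two $d\bar z$-factors and of two $\partial$-factors vanish, while those of a $\partial$-factor with a $d\bar z$-factor produce exactly the $\partial_j(\phi_i)^\bullet$-terms, so that collecting them reproduces the Schouten-Nijenhuys bracket --- this is just the length-$2$ computation displayed in the previous corollary, now carried out for tuples.

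With this dictionary, substituting $A=A_{\phi_1}$, $B=A_{\phi_2}$, $\rho=\omega$ into (\ref{Ht}) with $p=p_1+q_1$ and $q=p_2+q_2$, tensoring with $s$ and summing over the frame, turns the twisted commutator formula into
\[
\nabla\big(\phi_2\lrcorner(\phi_1\lrcorner\omega)\big)=(-1)^{p}\phi_1\lrcorner\nabla(\phi_2\lrcorner\omega)+(-1)^{(p-1)q}\phi_2\lrcorner\nabla(\phi_1\lrcorner\omega)+(-1)^{p-1}[\phi_1,\phi_2]\lrcorner\omega+(-1)^{p+q+1}\phi_2\lrcorner(\phi_1\lrcorner\nabla\omega),
\]
and the asserted identity is obtained by solving for $[\phi_1,\phi_2]\lrcorner\omega$ once the resulting signs are reconciled with the Koszul signs already built into the contraction operator (through the commutator rule (\ref{cr})) and into the Schouten-Nijenhuys bracket. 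For $p_1=q_1=p_2=q_2=1$ this is exactly the previous corollary. I expect this last sign reconciliation, together with the identification $[A_{\phi_1},A_{\phi_2}]\cdot\omega=[\phi_1,\phi_2]\lrcorner\omega$, to be the only real obstacle: one must check that the signs produced by commuting the $q_i$ wedge-type factors past the $p_j$ contraction-type factors, the intrinsic signs (such as $(-1)^{p_1p_2}$ or $(-1)^{q_1q_2}$) of the Schouten-Nijenhuys bracket on polyvectors and of (\ref{cr}), and the four global signs in (\ref{Ht}) all conspire so that the clean $+,-,+,-$ pattern of the statement survives uniformly in $(p_i,q_i)$. Everything else --- the locality reduction, the frame-summation, and the term-by-term expansion of the generalized Schouten bracket --- is routine and parallels the proofs of Theorem \ref{TTh} and of the preceding corollary.
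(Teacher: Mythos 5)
Your route is the one the paper itself intends: the paper's entire ``proof'' of Corollary \ref{main} is the remark that it follows almost by the same argument as the preceding corollary, i.e.\ by feeding $A=A_{\phi_1}$, $B=A_{\phi_2}$ and $R=-\theta$ into Proposition \ref{Hodd} and translating back via the $g\cdot\partial f$ device and the frame summation, exactly as you describe. So the locality reduction, the choice $E^*\cong T^{1,0}\oplus(T^*)^{0,1}$, the dictionary $\phi_i\mapsto A_{\phi_i}$ and the passage from $d_R$ to $\nabla$ are all in line with the paper.

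However, the one step you leave as an expectation --- that the signs ``all conspire so that the clean $+,-,+,-$ pattern survives uniformly in $(p_i,q_i)$'' --- is a genuine gap, and in this uniform form it is not merely unverified but false. The prefactors in (\ref{Ht}) are $(-1)^{p}$, $(-1)^{(p-1)q}$, $(-1)^{p-1}$, $(-1)^{p+q+1}$ with $p=p_1+q_1$, $q=p_2+q_2$; they reduce to the pattern of Corollary \ref{H1AB2} only when both $p_1+q_1$ and $p_2+q_2$ are even, and for the other parities any rescue would have to come from degree-dependent signs hidden in the identifications $A_{\phi_1}\cdot A_{\phi_2}\cdot\omega\leftrightarrow\phi_2\lrcorner(\phi_1\lrcorner\omega)$ and $[A_{\phi_1},A_{\phi_2}]_R\leftrightarrow[\phi_1,\phi_2]$, which you do not compute. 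A quick test shows they cannot all cancel: take $q_1=q_2=0$, $p_1=p_2=1$, so $\phi_1=X$, $\phi_2=Y$ are $(1,0)$ vector fields and the Schouten--Nijenhuys bracket is the Lie bracket; then (\ref{cf}), using $X\lrcorner(Y\lrcorner\omega)=-Y\lrcorner(X\lrcorner\omega)$, gives $[X,Y]\lrcorner\omega=X\lrcorner\nabla(Y\lrcorner\omega)-\nabla\big(Y\lrcorner(X\lrcorner\omega)\big)-Y\lrcorner\nabla(X\lrcorner\omega)-Y\lrcorner(X\lrcorner\nabla\omega)$, with a minus sign on the third term where the claimed pattern has a plus. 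Hence the signs are parity-dependent, and the actual content of Corollary \ref{main} beyond the $p_i=q_i=1$ case is precisely the bookkeeping you defer: one must either restrict to the parities for which the clean pattern holds (e.g.\ $p_i+q_i$ even, the case genuinely covered by Corollary \ref{H1AB2}), or recast the middle terms as $(\phi_1\wedge\phi_2)\lrcorner(\cdot)$ in the conventions of the Barannikov--Kontsevich style restatement and record the resulting degree-dependent signs explicitly. Without that computation the proposal does not yet establish the identity as stated (the paper, it should be said, omits this verification as well).
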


\section{Acknowledgement} The second author would like to
thank Professor A. Todorov, Dr. Fangliang Yin and Shengmao Zhu for
several useful talks, and specially Dr. Feng Guan for his
constructive suggestion on Theorem \ref{TTh} during his visit to
Center of Mathematical Sciences, Zhejiang University. Moreover, we
also would like to thank the referee for their useful and accurate
comments improving our presentation a lot.  \vspace{1cm}



\begin{thebibliography}{}

\bibitem{[BK]} S. Barannikov and M. Kontsevich,  \textit{Frobenius manifolds and formality of Lie algebras of polyvector fields}, Internat. Math. Res. Notices (1998), no. 4, 201-215.

\bibitem{[F]} R. Friedman, \textit{On threefolds with trivial canonical
bundle}, Complex geometry and Lie theory (Sundance, UT, 1989),
103--134, Proc. Sympos. Pure Math., 53, Amer. Math. Soc.,
Providence, RI, 1991.

\bibitem{[G]} M. Gualtieri, \textit{Generalized Complex Geometry,} D.Phil thesis,
Oxford University, arXiv:math.DG/0401221.

\bibitem{[KL]} A. Kapustin and Yi Li, \textit{Topological sigma-models with $H$-flux and
twisted generalized complex manifolds}, Adv. Theor. Math. Phys.
Volume 11, Number 2 (2007), 269-290.

\bibitem{[Li]} Yi Li, \textit{On deformations of generalized complex structures the generalized Calabi-Yau
case}, arXiv:hep-th/0508030v2 15 Oct 2005.

\bibitem{[LSY]} K. Liu, X. Sun and S.-T Yau, \textit{Recent development on the geometry of the
Teichm\"{u}ller and moduli spaces of Riemann surfaces}, to appear in
Surveys in Differential Geometry XIV, 2009.


\bibitem{[MK]} J. Morrow and K. Kodaira, \textit{Complex manifolds}, Holt, Rinehart and
Winston, Inc., New York-Montreal, Que.-London, 1971.

\bibitem{[S]} S. Lang, \textit{Differential and Riemannian manifolds}, Third edition, Graduate Texts in Mathematics, 160, Springer-Verlag, New York, 1995.

\bibitem{[SW]} P. \v{S}evera and A. Weinstein, \textit{Poisson geometry with a 3-form
background}, Prog. Theor. Phys. Suppl., 144, (2001),145-154.

\bibitem{[T]} G. Tian, \textit{Smoothness of the universal deformation space of Calabi-Yau manifolds and its Petersson-Weil metric}, Math. Aspects of
String Theory, ed. S.-T.Yau, World Scientic (1998), 629-346.

\bibitem{[T1]} A. Todorov, \textit{The Weil-Petersson geometry of moduli spaces of $\mathbb{SU}$(n$\geq$3)(Calabi-Yau manifolds) I}, Comm. Math. Phys.
126 (1989), 325-346.

\bibitem{[T2]} A. Todorov, \textit{Moduli space of polarized CY manifolds},
preprint.

\end{thebibliography}
\end{document}